%% file: arXiv.tex
\documentclass[a4paper]{article}
\usepackage[left=3.5cm,right=3.0cm,top=2.5cm,bottom=2.5cm]{geometry}
\usepackage{xcolor}
\usepackage{graphicx}
\usepackage{cite}
\usepackage{bm}
\usepackage{amsmath,amssymb,amscd,amsxtra,amsfonts}
\usepackage{lipsum}
\usepackage{amsfonts}
\usepackage{graphicx}
\usepackage{multirow}

\usepackage{array}
\usepackage{amssymb}
\usepackage{verbatim}

\usepackage{xcolor}

\usepackage{epstopdf}
\usepackage{amsmath,bm,stmaryrd}

\usepackage{amsthm}

\usepackage{mathtools}
\usepackage{indentfirst}

\newcommand\keywordsname{Key words}
\newcommand\AMSname{AMS subject classifications}

\newenvironment{@abssec}[1]
{\if@twocolumn
\section*{#1}%
\else
\vspace{.05in}\footnotesize
\parindent .2in
{\upshape\bfseries #1. }\ignorespaces
\fi}

{\if@twocolumn\else\par\vspace{.1in}\fi}

\newenvironment{keywords}{\begin{@abssec}{\keywordsname}}{\end{@abssec}}

\input{macros}

\newtheorem{theorem}{Theorem}[section]%  meant for continuous numbers
\newtheorem{lemma}[theorem]{Lemma}%

\newtheorem{remark}{Remark}%

\numberwithin{equation}{section}

\title{High-order Energy-stable and Charge-conservative Lagrangian FEM for 3D Incompressible Inductionless MHD equations with Variable Density}
\author{Lingxiao Li \thanks{School of Mathematics and Statistics, Henan University, Kaifeng 475004, China.}
\and
Bijie Song
\thanks{School of Mathematics and Statistics, Henan University, Kaifeng 475004, China.}
\and
Haiyan Su
\thanks{Academy of Mathematics and Systems Science, Xinjiang University, Urumqi 830017, China.}
\and
Jie Zhang \thanks{State Key Laboratory for Strength and Vibration of Mechanical Structures, School of Aerospace, Xi'an Jiaotong University, Xi'an 710049, China.}}
\begin{document}
\date{}
\maketitle

\begin{abstract}
  In this paper, we develop a high-order, energy-stable and
  charge-conservative Lagrangian finite element method for variable-density incompressible inductionless magnetohydrodynamic (MHD) equations.
  The method utilizes the moving high-order curved tetrahedral mesh
  to track the material interface.
  Second-order Backward Differentiation Formula (BDF2) is used for the temporal discretization of material derivative, together with the second-order Adams--Bashforth method (AB2) for the update of control points of the meshes.
  High-order isoparametric Taylor-Hood elements with grad-div stabilization are used for the velocity-pressure pair.
  While, to ensure the discrete charge conservation, high-order parametric $\BH(\Div)$-conforming element is adopted for the current density.  In the absence of external force, the unconditional energy-stability of the fully discrete scheme is proven.  Finally, 3D numerical experiments are conducted to confirm the expected high-order accuracy for
  smooth solutions, the energy stability property and the capability of the proposed method.
\end{abstract}

\begin{keywords}
Inductionless MHD equations, High-order Lagrangian finite element method, Charge conservation,
Moving curved meshes, Energy stable scheme.
\end{keywords}

\section{Introduction}\label{sec:intro}

Magnetohydrodynamic (MHD) flows of electrically conducting liquids arise
in a wide range of technological applications, including liquid-metal
blankets in nuclear fusion devices \cite{kho23} and electromagnetic
processing of metals \cite{ger06}.
In some liquid-metal applications in the earth,
magnetic Reynolds number is small enough such that the magnetic field induced by the fluid motion is negligible
relative to the applied field, leading to the inductionless MHD approximation \cite{ni07a,ni07,ni12,als21}.
In heterogeneous or multi-phase mixing configurations, moving interface may be present. Moreover,
the density, electrical conductivity and dynamic viscosity may vary spatially and
exhibit jumps across material interfaces \cite{ger06,zha14mhd}.
To solve this type of problem, in this paper, we use the model of
variable-density inductionless MHD system, which couples
incompressible variable-density Navier-Stokes equations \cite{guermond2000,zha24} with
a quasi-static current density subsystem \cite{li19a,ni12}.

\par
Numerical treatments of moving material interfaces in MHD can be broadly
divided into sharp-interface and diffuse-interface formulations.
Among sharp-interface methods, front-tracking techniques represent the
interface explicitly and have been applied to free-surface MHD flows at
low magnetic Reynolds numbers \cite{sam07}.
Other sharp-interface approaches, including VOF and level-set methods \cite{ki10},
represent the moving interface through auxiliary fields on a background
mesh and have been used in multiphase and free-surface MHD simulations.
In particular, VOF-based formulations combined with adaptive mesh refinement
have been developed successfully for multi-phase liquid-metal MHD flows with large variations
in material parameters \cite{pan12,zha18,zha14,zha14mhd}.
Diffuse-interface, or phase-field formulations replace the sharp material
interface by a transition layer of finite thickness \cite{she10}.
Such formulations have also been developed for two-phase MHD and inductionless
MHD systems, with particular attention to conservation properties and
discrete energy stability \cite{dua25,sz23,wan25}.

\par
In contrast, sharp-interface strategy maintains a computational mesh
fitted to the moving interface.
In a Lagrangian formulation, the mesh moves with the material velocity, so
that the interface is transported directly with the mesh and no separate
interface-advection equation is required \cite{ben92,dob11,dob12}.
A principal limitation of purely Lagrangian methods is the deterioration of mesh quality under
large deformation.
Arbitrary Lagrangian--Eulerian (ALE) methods alleviate this limitation by
allowing the mesh velocity to differ from the fluid velocity \cite{hir74}.
In fitted interface-tracking ALE formulations, this freedom can be used to
control the interior mesh deformation while maintaining alignment with the
moving interface \cite{anj20,ger03,garcke2023}.
A recent sharp-interface ALE finite element method for two-phase ferrofluid
flows with unmatched densities was developed in \cite{wang26fhd}.
Moreover, the particle finite element method (PFEM) \cite{aub06,riz24,sal24} integrates standard finite element techniques with dynamic remeshing within a Lagrangian framework, thereby accommodating severe mesh deformation and facilitating the explicit tracking of evolving free surfaces. Furthermore, meshfree methods such as smoothed particle hydrodynamics (SPH) \cite{gar23,hop15,ros07,suc25} avoid fixed mesh-connectivity constraints. Their flexibility makes them well suited to problems involving substantial deformation and complex topological changes, such as droplet splashing, fragmentation, and coalescence. In the present work, we mainly focus on the pure Lagrangian FEM to solve the variable-density inductionless MHD equations.

In the Lagrangian methods, for problems with moving interfaces, realizing high-order spatial
accuracy generally requires the evolving geometry to be represented with comparable
accuracy (see the pioneering works in \cite{Cheng2008,dob12}).
High-order curvilinear finite element framework has therefore been
developed during past decades \cite{Cheng2008,dob11,dob12,dob13,dob14,neunteufel,nik22}.
In particular, optimal-order convergence of high-order ALE finite element methods
has recently been proved for two-phase Navier--Stokes flows \cite{limq26} without surface tension.
Beyond geometric accuracy, moving-mesh discretizations of incompressible flow and MHD systems must also account for
how to preserve physical constraints on the discrete level.
In the Eulerian framework, we refer the readers to the work in \cite{hip17,li19a,li21jcp,ni07} and references therein.
In the moving mesh or Lagrangian framework, important developments include adaptive moving mesh method for
ideal MHD using magnetic potential formulation \cite{han2007}, high-order $\BH(\Div)$-conforming ALE formulations
for fluid--structure interaction that yield exactly divergence-free
discrete velocities \cite{neunteufel}. Moreover, Nikl et al. \cite{nik22} developed a high-order curvilinear
Lagrangian MHD schemes that maintain the divergence-free constraint of magnetic field.
Garcke et al. proposed an energy stable and volume preserving ALE-FEM for
two-phase incompressible Navier-Stokes equations in \cite{garcke2023}.
In the present work, one main focus of our method is to preserve the charge-conservation precisely on the moving curved meshes, namely $\Div\BJ_h = 0$.
Charge-conservative discretizations of inductionless MHD have been developed
in both finite-volume and finite element settings.
Early finite-volume schemes were constructed on both collocated grids and staggered meshes \cite{ni07a,ni07,ni12},
which show good results at high Hartmann numbers. Within mixed finite element framework, $\BH(\Div)$-conforming
approximation of the current density was subsequently introduced for
three-dimensional inductionless MHD, yielding divergence-free discrete current density \cite{li19a}.

The charge-conservative methods mentioned above for inductionless MHD are formulated in the Eulerian framework,
where the computational mesh is fixed. The material interfaces is either represented in the diffuse manner or
reconstructed using techniques like VOF or level set.
In the Lagrangian framework \cite{Cheng2008,dob12}, however, the computational
mesh and material interface is moving with velocity, such that special care should be taken of for the finite element spaces to retain the high-order accuracy, discrete charge conservation and energy stable property.
Moreover, variable density introduces extra difficulty to realize stability, because the equation for density is first-order system. In the present work, we follow the ideas of strong mass conservation principle introduced
in \cite{dob12} to obtain an energy stable fully discrete scheme.

\par
Thus the main work in the present paper is to develop a high-order, charge-conservative and energy stable Lagrangian
finite element method for the variable-density incompressible inductionless MHD system on moving
curved meshes. The main contributions are summarized as follows:

\begin{itemize}
	\item
	We construct a high-order mixed finite element scheme on evolving curvilinear Lagrangian meshes.
    With the help of Piola's transformation \cite{fg16},
	the current density is approximated by a parametric
	$\BH(\Div)$-conforming BDM element \cite{LI2025}, while the velocity--pressure pair is
	discretized by isoparametric Taylor--Hood elements with grad-div
	stabilization \cite{or04}. As a result, our scheme can retain both high-order accuary and
	$\nabla\cdot\BJ_h=0$ exactly on the discrete level.
	
	\item	We establish the continuous energy law and construct a fully discrete scheme
    on successive Lagrangian meshes using BDF2 formulation. The discrete flow map is advanced by the second-order Adams--Bashforth
	method and the density update is based on the strong mass conservation principle in \cite{dob12}.
	Together with the cancellation of the discrete electromagnetic
	coupling terms, we prove a fully discrete energy identity
	and, in the absence of external force, unconditional energy stability.
	
	\item
	A series of 3D numerical experiments confirm the expected spatial
	convergence rates for smooth solutions, the exact discrete charge
	conservation, the energy-dissipation inequality and the applicability of the scheme.
    Especially, 3D Rayleigh-Taylor instability under a constant magnetic field is simulated.
\end{itemize}

\par
The remainder of this paper is organized as follows.
In Section \ref{sec:mathematical_model}, we present the variable-density
incompressible inductionless MHD model in the Lagrangian framework,
the mixed variational formulation, and the continuous energy law.
In Section \ref{sec:scheme}, we develop a high-order charge-conservative mixed
finite element method on evolving curvilinear meshes and establish exact
discrete charge conservation and a fully discrete energy law.
Finally, in Section \ref{sec:experiment}, we conduct a series of 3D numerical examples
to verify high-order accuracy, discrete charge conservation, energy dissipation
and the applicability of the proposed algorithm.
Concluding remarks are given in Section \ref{sec:conclusion}.

\section{Mathematical model and Lagrangian formulation}
\label{sec:mathematical_model}

\subsection{Variable-density inductionless MHD equations}
\label{subsec:global_model}
For each $t\in[0,T]$, let $\Omega(t)\subset\mathbb{R}^{3}$ be a bounded
Lipschitz domain representing the current fluid region, and set
$\Omega_0:=\Omega(0)$. We consider a global formulation of the
variable-density incompressible inductionless MHD equations on $\Omega(t)$. This formulation applies both to
interface-free inhomogeneous flows with variable density and to flows with moving
material interface.

Let $\rho_0$, $\sigma_0$, and $\nu_0$ denote the initial (or reference)
density, electrical conductivity and dynamic viscosity,
respectively. Here and below, $\widehat{\boldsymbol{x}}$ denotes a point in the
reference configuration $\Omega_0$. We assume that these material fields are essentially
bounded and uniformly positive:  there
exist constants $\rho_{\min}$, $\rho_{\max}$, $\sigma_{\min}$,
$\sigma_{\max}$, $\nu_{\min}$, and $\nu_{\max}$ such that
\begin{equation}
	\label{eq:material_bounds}
	\begin{aligned}
		0<\rho_{\min}
		&\leq \rho_0(\widehat{\boldsymbol{x}})
		\leq \rho_{\max}<\infty,\\
		0<\sigma_{\min}
		&\leq \sigma_0(\widehat{\boldsymbol{x}})
		\leq \sigma_{\max}<\infty,\\
		0<\nu_{\min}
		&\leq \nu_0(\widehat{\boldsymbol{x}})
		\leq \nu_{\max}<\infty
	\end{aligned}
	\qquad
	\text{for }\widehat{\boldsymbol{x}}\in\Omega_0.
\end{equation}

In the presence of material interface, in this paper, we use the typical two-phase flows as prototype.
Let $\Omega_{1,0}$ and $\Omega_{2,0}$ be bounded Lipschitz sub-domains satisfying
\begin{equation}
	\label{eq:domain_decomposition}
	\Omega_0\setminus\Gamma_0=\Omega_{1,0}\cup\Omega_{2,0},\qquad
	\Omega_{1,0}\cap\Omega_{2,0}=\varnothing,\qquad
	\Gamma_0=\overline{\Omega}_{1,0}\cap\overline{\Omega}_{2,0}
	=\partial\Omega_{1,0}\cap\partial\Omega_{2,0}\Subset\Omega_0.
\end{equation}
Here $\Gamma_0$ represents the initial material interface. The initial
material fields $\rho_0$, $\sigma_0$ and $\nu_0$ are assumed to be
piecewise regular with respect to this partition and may exhibit jumps
across $\Gamma_0$. In the interface-free case, we set
$\Gamma_0=\varnothing$. Figure~\ref{fig:interface1}(a) illustrates a representative initial
configuration with an internal material interface for typical two-phase flow.
In this paper, we assume that there is no interfacial mass transfer, diffusive mixing or surface tension.
\begin{figure}[!htbp]
	\centering
	\includegraphics[width=12.5cm]{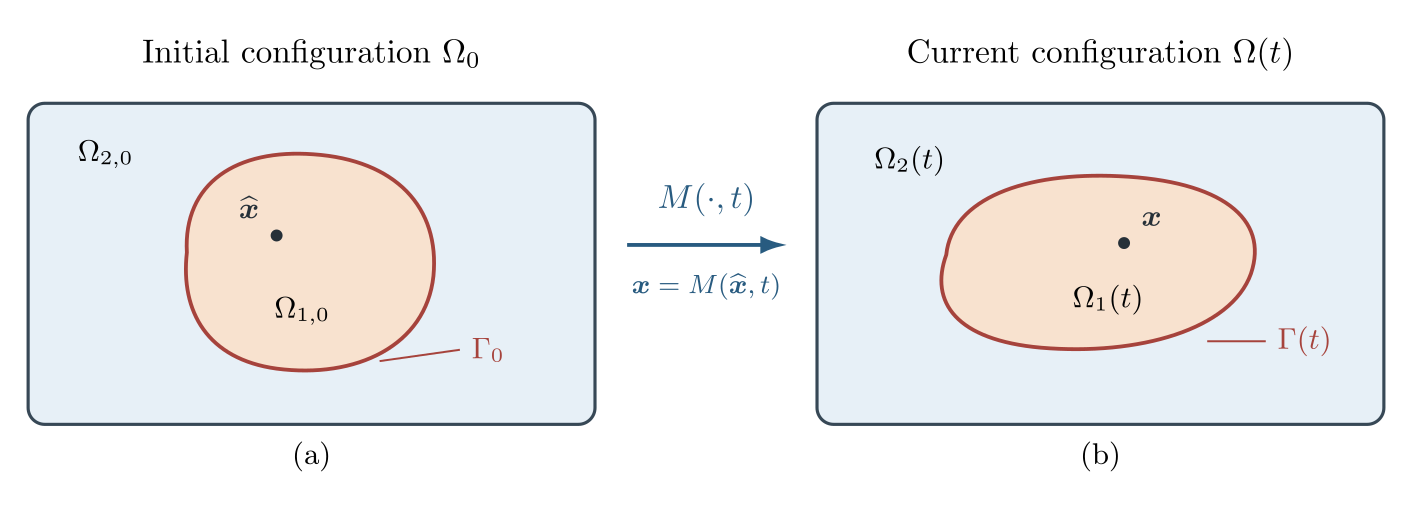}
	\caption{Representative initial and current configurations with an
		internal material interface: (a) the initial configuration
		$(\Omega_0,\Gamma_0)$; (b) the corresponding current configuration
		$(\Omega(t),\Gamma(t))$.}
	\label{fig:interface1}
\end{figure}

The unknown variables are the density $\rho$, velocity $\boldsymbol{u}$,
pressure $p$, current density $\boldsymbol{J}$ and electric
potential $\phi$. The electrical conductivity $\sigma$ and dynamic
viscosity $\nu$ are material parameters. The applied magnetic field
$\boldsymbol{B}=\boldsymbol{B}(\boldsymbol{x})$ is prescribed, while $\boldsymbol{f}_u$
denotes body force per unit mass. Under the quasistatic inductionless
approximation, the magnetic field induced by the fluid motion is
neglected relative to the applied field, and the electric field is
represented by the electrostatic potential as
$\boldsymbol{E}=-\nabla\phi$.

 The fluid motion is described by the Lagrangian flow map
$\boldsymbol{M}(\cdot,t):\Omega_0\rightarrow\Omega(t)$,
$\widehat{\boldsymbol{x}}\mapsto
\boldsymbol{x}=\boldsymbol{M}(\widehat{\boldsymbol{x}},t)$, defined by
\begin{equation}
	\label{eq:flow_map}
	\frac{\partial \boldsymbol{M}}{\partial t}
	(\widehat{\boldsymbol{x}},t)
	= \Bu(\Bx,t).
\end{equation}
For each $t\in[0,T]$, we assume that $\boldsymbol{M}(\cdot,t)$ is an
orientation-preserving diffeomorphism from $\Omega_0$ onto
$\Omega(t)$. The electrical conductivity and dynamic viscosity are evolved by
\begin{equation}
	\label{eq:material_coefficient_transport}
	\begin{aligned}
		\sigma(\Bx,t)
		=
		\sigma_0(\widehat{\boldsymbol{x}}),\quad
		\nu(\Bx,t)
		=\nu_0(\widehat{\boldsymbol{x}})
	\end{aligned}.
\end{equation}
When material interface exists, e.g. in the two-phase flow, the initial material partition is
transported by the same flow map:
\begin{equation}
	\label{eq:partition_transport}
	\Omega_i(t)=\boldsymbol{M}(\Omega_{i,0},t),
	\quad i=1,2,
	\qquad
	\Gamma(t)=\boldsymbol{M}(\Gamma_0,t).
\end{equation}
Thus $\Gamma(t)$ is a material interface transported by the fluid.
The resulting material-interface configuration is illustrated
in Figure~\ref{fig:interface1}(b).

For a sufficiently regular field $\psi(\Bx,t)$,
the material derivative is
defined by \cite{dob12}
\begin{equation}
	\label{eq:material_derivative}
	\frac{d\psi}{dt}
	:=
	\frac{\partial\psi}{\partial t}
	+
	(\boldsymbol{u}\cdot\nabla)\psi.
\end{equation}
The Cauchy stress tensor is defined by
\begin{equation}
	\label{eq:stress_tensor}
	\boldsymbol{\tau}
	:=
	-p\bbI
	+
	2\nu\boldsymbol{D}(\boldsymbol{u}),
	\qquad
	\boldsymbol{D}(\boldsymbol{u})
	:=
	\frac{1}{2}
	\left(
	\nabla\boldsymbol{u}
	+
	(\nabla\boldsymbol{u})^{T}
	\right),
\end{equation}
where $\bbI$ denotes the identity tensor. Given the symbols above, the variable-density inductionless MHD system reads
\cite{guermond2000,li19a,ni12}
\begin{subequations}
	\label{eq:model_0st}
	\begin{alignat}{2}
		\sigma^{-1}\boldsymbol{J}
		+\nabla\phi
		-\boldsymbol{u}\times\boldsymbol{B}
		&=\boldsymbol{0},
		&\quad&
		\text{in }\Omega(t),
		\label{eq:model_0st:a}\\
		\rho\frac{d\boldsymbol{u}}{dt}
		-\nabla\cdot\boldsymbol{\tau}
		-\boldsymbol{J}\times\boldsymbol{B}
		&=\rho\boldsymbol{f}_u,
		&\quad&
		\text{in }\Omega(t),
		\label{eq:model_0st:b}\\
		\frac{d\rho}{dt}
		+\rho\nabla\cdot\boldsymbol{u}
		&=0,
		&\quad&
		\text{in }\Omega(t),
		\label{eq:model_0st:c}\\
		\nabla\cdot\boldsymbol{J}
		&=0,
		&\quad&
		\text{in }\Omega(t),
		\label{eq:model_0st:d}\\
		\nabla\cdot\boldsymbol{u}
		&=0,
		&\quad&
		\text{in }\Omega(t).
		\label{eq:model_0st:e}
	\end{alignat}
\end{subequations}
For the easy presentation, we enforce the following initial and boundary conditions
\begin{subequations}
	\label{eq:initial_boundary_conditions}
	\begin{alignat}{3}
		\boldsymbol{u}(\boldsymbol{x},0)
		&=\boldsymbol{u}_0(\boldsymbol{x}),
		&\qquad
		\rho(\boldsymbol{x},0)
		&=\rho_0(\boldsymbol{x}),
		&\qquad&
		\text{in }\Omega_0,
		\label{eq:initial_boundary_conditions:a}\\
		\boldsymbol{u}
		&=\boldsymbol{0},
		&\qquad
		\phi
		&=0,
		&\qquad&
		\text{on }\partial\Omega(t).
		\label{eq:initial_boundary_conditions:b}
	\end{alignat}
\end{subequations}
Here $\Bu_0(\Bx)$ is assumed to satisfy the compatibility
conditions associated with the incompressibility constraint and the
homogeneous Dirichlet boundary condition.

\begin{remark}
	For sufficiently smooth solutions, the mass
	balance~\eqref{eq:model_0st:c} and the incompressibility
	constraint~\eqref{eq:model_0st:e} yield
	\begin{equation}
		\label{eq:density_transport}
		\frac{d\rho}{dt}=0.
	\end{equation}
	Consequently, the density remains constant along the material
	trajectories generated by the flow map $\boldsymbol{M}(\cdot,t)$. If $\rho$ has an
	interfacial jump, \eqref{eq:density_transport} holds in each
	material sub-domain where $\rho$ is smooth, whereas global mass
	conservation is expressed in the reference configuration by the
	Lagrangian identity derived in Subsection~\ref{subsec:lagrangian_weak_formulation}.
\end{remark}

We now nondimensionalize the governing system
\eqref{eq:model_0st}. To this end, let $L_c$, $u_c$, $B_c$,
$\rho_c$, $\sigma_c$ and $\nu_c$ denote characteristic scales
for length, velocity, magnetic induction, density, electrical
conductivity and dynamic viscosity, respectively. Dimensionless
quantities are denoted temporarily by a superscript asterisk.  We set
\begin{equation}
	\label{eq:dimensionless_variables}
	\begin{gathered}
		\boldsymbol{x}^{*}
		=\tfrac{\boldsymbol{x}}{L_c},
		\quad
		t^{*}
		=\tfrac{u_c}{L_c}t,
		\quad
		\rho^{*}
		=\tfrac{\rho}{\rho_c},
		\quad
		\sigma^{*}
		=\tfrac{\sigma}{\sigma_c},
		\quad
		\nu^{*}
		=\tfrac{\nu}{\nu_c},
		\\[0.6ex]
		\boldsymbol{u}^{*}
		=\tfrac{\boldsymbol{u}}{u_c},
		\quad
		p^{*}
		=\tfrac{p}{\rho_cu_c^{2}},
		\quad
		\boldsymbol{B}^{*}
		=\tfrac{\boldsymbol{B}}{B_c},
		\quad
		\boldsymbol{J}^{*}
		=\tfrac{\boldsymbol{J}}{\sigma_cu_cB_c},
		\quad
		\phi^{*}
		=\tfrac{\phi}{u_cB_cL_c},
		\quad
		\boldsymbol{f}_u^{*}
		=\tfrac{L_c}{u_c^{2}}\boldsymbol{f}_u .
	\end{gathered}
\end{equation}
The resulting dimensionless parameters are
\begin{equation}
	\label{eq:dimensionless_parameters}
	\mathrm{Re}
	=
	\frac{\rho_cu_cL_c}{\nu_c},
	\qquad
	\alpha
	=
	\frac{\sigma_cB_c^{2}L_c}{\rho_cu_c}.
\end{equation}
where $\mathrm{Re}$ and $\alpha$ denote the Reynolds number and the magnetic interaction parameter (Stuart number), respectively. Substituting \eqref{eq:dimensionless_variables} into
\eqref{eq:model_0st} and dropping the asterisks, we obtain the
non-dimensional system
\begin{subequations}
	\label{eq:model_1st}
	\begin{alignat}{2}
		\sigma^{-1}\boldsymbol{J}
		+\nabla\phi
		-\boldsymbol{u}\times\boldsymbol{B}
		&=\boldsymbol{0},
		&\quad&
		\text{in }\Omega(t),
		\label{eq:model_1st:a}\\
		\rho\frac{d\boldsymbol{u}}{dt}
		-\frac{1}{\mathrm{Re}}
		\nabla\cdot
		\left(
		2\nu\boldsymbol{D}(\boldsymbol{u})
		\right)
		+\nabla p
		-\alpha\boldsymbol{J}\times\boldsymbol{B}
		&=
		\rho\boldsymbol{f}_u,
		&\quad&
		\text{in }\Omega(t),
		\label{eq:model_1st:b}\\
		\frac{d\rho}{dt}
		+\rho\nabla\cdot\boldsymbol{u}
		&=0,
		&\quad&
		\text{in }\Omega(t),
		\label{eq:model_1st:c}\\
		\nabla\cdot\boldsymbol{J}
		&=0,
		&\quad&
		\text{in }\Omega(t),
		\label{eq:model_1st:d}\\
		\nabla\cdot\boldsymbol{u}
		&=0,
		&\quad&
		\text{in }\Omega(t).
		\label{eq:model_1st:e}
	\end{alignat}
\end{subequations}

Under this nondimensionalization, the initial and boundary conditions
retain the form stated in
\eqref{eq:initial_boundary_conditions}. Henceforth, all quantities
are understood to be dimensionless unless stated otherwise. In the
next subsection, we introduce the deformation gradient and Jacobian
determinant associated with the flow map $\BM$ and express the density $\rho$
using the strong mass conservation principle \cite{dob12}.

\subsection{Lagrangian mass conservation and global weak formulation}
\label{subsec:lagrangian_weak_formulation}

For the flow map $\BM$ defined in \eqref{eq:flow_map}, we introduce
the deformation gradient and its Jacobian determinant by
\begin{equation}
	\label{eq:deformation_jacobian}
	D\BM(\widehat{\Bx},t)
	:=
	\nabla_{\widehat{\boldsymbol{x}}}
	\BM(\widehat{\boldsymbol{x}},t),
	\qquad
	\Cj_{\BM}(\widehat{\boldsymbol{x}},t)
	:=
	\det(D\BM(\widehat{\Bx},t)).
\end{equation}
The initial condition for the flow map implies
\[D\BM(\widehat{\boldsymbol{x}},0)
=
\bbI,
\qquad
\Cj_{\BM}(\widehat{\boldsymbol{x}},0)
=1,\]
where $\bbI\in\mathbb{R}^{3\times3}$ denotes the identity matrix. For sufficiently smooth scalar- or vector-valued $\psi(\Bx,t)$, defining
\[\hat{\psi}(\widehat{\Bx},t) = \psi(\Bx,t),\]
the chain rule and \eqref{eq:flow_map} give
\begin{equation}
	\label{eq:material_lagrangian}
	\frac{\partial}{\partial t}
	\hat{\psi}\bigl(\widehat{\Bx},t\bigr)
	=
	\frac{\partial\psi}{\partial t}(\Bx,t)
	+
	\Bu\cdot\nabla\psi(\Bx,t)= \frac{d\psi}{dt}.
\end{equation}
For the derivation below, we will use the basic Liouville formula
\begin{equation}
	\label{eq:jacobian_evolution}
	\frac{\partial \Cj_{\BM}}{\partial t}
	(\widehat{\boldsymbol{x}},t)
	=  \Cj_{M}(\widehat{\Bx},t) \nabla\cdot\Bu(\Bx,t).
\end{equation}
See, for example, \cite[Eq.~(2.5)]{dob12} and \cite[Eq.~(5.18)]{ger06}.
Combining \eqref{eq:material_lagrangian},
\eqref{eq:jacobian_evolution} and the mass conservation equation
\eqref{eq:model_1st:c}, we obtain, in each material subdomain where
$\rho$ is sufficiently regular,
\begin{equation*}
\frac{d(\rho \Cj_{\BM})}{dt} = \frac{d\rho}{dt}\Cj_{\BM} + \rho \frac{\partial\Cj_{\BM}}{\partial t} =
\left[\frac{d\rho}{dt}+\rho\nabla\cdot\Bu\right]\Cj_{\BM}=0.
\end{equation*}
Integrating this identity in time, we can obtain, for $\widehat{\boldsymbol{x}}\in\Omega_0,\Bx=\BM(\widehat{\Bx},t) \in \Omega(t)$,
\begin{equation}
	\label{eq:lagrangian_mass_conservation}
	\rho(\Bx,t)
	=\frac{\rho_0(\widehat{\boldsymbol{x}})}
	{\Cj_{\BM}(\widehat{\boldsymbol{x}},t)}.
\end{equation}
Following \cite{dob12}, we refer to
\eqref{eq:lagrangian_mass_conservation} as the strong
mass-conservation principle. By the change-of-variables formula, it
implies conservation of mass in every material region transported by
$\BM$.

For the continuous incompressible problem, because
$\nabla\cdot\Bu=0$ and
$\Cj_{\BM}(\widehat{\boldsymbol{x}},0)=1$,
from \eqref{eq:jacobian_evolution}, we can conclude that $\Cj_{\BM}(\widehat{\boldsymbol{x}},t)=1$. Consequently,
\[
\rho \circ\BM = \rho_0 \quad\text{in}~~\Omega_0.
\]
Nevertheless, we will retain the Jacobian-weighted relation
\eqref{eq:lagrangian_mass_conservation}, since its discrete counterpart
provides the density update and importantly, yields the time-independent velocity mass matrix
which is crucial in the construction of energy stable fully discrete scheme.

With $\sigma$, $\nu$ and $\rho$, determined by \eqref{eq:material_coefficient_transport} and
\eqref{eq:lagrangian_mass_conservation}, we now give the mixed variational formulation on $\Omega(t)$. For each $t\in[0,T]$, define
\[
\begin{aligned}
	\boldsymbol{W}(t)
	:=
	\boldsymbol{H}
	\bigl(\operatorname{div};\Omega(t)\bigr),
	\quad
	\boldsymbol{V}(t)
:=
	\boldsymbol{H}_0^1(\Omega(t)),	\quad
	S(t)
	:=
	L^2(\Omega(t)),	\quad
	Q(t)
	:=	L_0^2(\Omega(t)).
\end{aligned}
\]
The homogeneous Dirichlet condition for the electric potential is
incorporated weakly through the boundary term arising from integration
by parts of $\nabla\phi$.

Then, for any $t\in(0,T]$, the mixed weak formulation is to
find
\[
(\BJ(t),\Bu(t),\phi(t),p(t)) \in \boldsymbol{W}(t) \times \boldsymbol{V}(t) \times S(t)\times Q(t)
\]
such that the following identities hold for all
$\boldsymbol{\varphi}\in\boldsymbol{W}(t)$,
$\boldsymbol{v}\in\boldsymbol{V}(t)$,
$r\in S(t)$ and $q\in Q(t)$:
\begin{subequations}
	\label{eq:model_2st}
	\begin{align}
		(\sigma^{-1}\BJ,\boldsymbol{\varphi})_{\Omega(t)}
		-
		(\phi,\nabla\cdot\boldsymbol{\varphi})_{\Omega(t)}
		-
		(\Bu\times\boldsymbol{B},\boldsymbol{\varphi})_{\Omega(t)}
		&=
		0,
		\label{eq:model_2st:a}\\[0.15em]
		\left(
		\rho\frac{d\Bu}{dt},
		\boldsymbol{v}
		\right)_{\Omega(t)}
		+
		\frac{1}{\mathrm{Re}}
		\left(
		2\nu\boldsymbol{D}(\Bu),
		\boldsymbol{D}(\boldsymbol{v})
		\right)_{\Omega(t)}
		-
		(p,\nabla\cdot\boldsymbol{v})_{\Omega(t)}
		-
		\alpha
		(\BJ\times\boldsymbol{B},\boldsymbol{v})_{\Omega(t)}
		&=
		(\rho\Bf_u,\boldsymbol{v})_{\Omega(t)},
		\label{eq:model_2st:b}\\[0.15em]
		-(\nabla\cdot\BJ,r)_{\Omega(t)}
		&=
		0,
		\label{eq:model_2st:c}\\
		-(\nabla\cdot\Bu,q)_{\Omega(t)}
		&=
		0.
		\label{eq:model_2st:d}
	\end{align}
\end{subequations}
Here homogeneous boundary condition for $\phi$ is incorporated weakly
in \eqref{eq:model_2st:a}, with the term arising from
integration by parts vanishing.

\begin{remark}
	The variational problem~\eqref{eq:model_2st} is posed on the
	entire domain $\Omega(t)$ and does not require a prescribed internal
	material partition. With smooth initial data, it
	describes an interface-free inhomogeneous flow. The
	constant-density, constant-coefficient problem can be seen as a
	special case. If a material interface is presented, $\rho_0,\sigma_0$ and $\nu_0$ may be
	piecewise regular and discontinuous across $\Gamma_0$; the corresponding
	density $\rho(\Bx,t)$, material parameters $\sigma(\Bx,t)$ and $\nu(\Bx,t)$ may be discontinuous across
	$\Gamma(t)=\BM(\Gamma_0,t)$.
\end{remark}

\subsection{Continuous energy law}
\label{subsec:continuous_energy_law}

We next derive the continuous energy balance for the variational
problem~\eqref{eq:model_2st}, coupled with the Lagrangian
mass-conservation relation
\eqref{eq:lagrangian_mass_conservation}. Since the applied magnetic field is prescribed,
no magnetic-field energy enters the balance; the dissipation rate consists of the viscous and Joule
contributions. We first establish the mass-weighted transport identity used to
differentiate the kinetic energy.

\begin{lemma}
	\label{lem:mass_transport}
	Assume that $\rho$ satisfies
	\eqref{eq:lagrangian_mass_conservation} and let $\psi(\Bx,t)$ be a sufficiently smooth scalar function defined on
    $\Omega(t)$. Then
	\begin{equation}
		\label{eq:mass_transport}
		\frac{\mathrm{d}}{\mathrm{d}t}
		\int_{\Omega(t)}
		\rho\psi\,\mathrm{d}\Bx
		=
		\int_{\Omega(t)}
		\rho\frac{d \psi}{dt}\,\mathrm{d}\Bx .
	\end{equation}
\end{lemma}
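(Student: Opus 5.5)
The plan is to pull the integral back to the fixed reference configuration $\Omega_0$ through the flow map $\BM(\cdot,t)$, so that the time derivative can be taken under the integral sign over a time-independent domain. Since $\BM(\cdot,t)$ is an orientation-preserving diffeomorphism from $\Omega_0$ onto $\Omega(t)$, the change-of-variables formula gives
\begin{equation*}
\int_{\Omega(t)}\rho\psi\,\mathrm{d}\Bx=\int_{\Omega_0}\rho(\BM(\widehat{\Bx},t),t)\,\hat{\psi}(\widehat{\Bx},t)\,\Cj_{\BM}(\widehat{\Bx},t)\,\mathrm{d}\widehat{\Bx},
\end{equation*}
where $\Cj_{\BM}>0$ by orientation preservation, so no absolute value is needed. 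By the strong mass-conservation principle \eqref{eq:lagrangian_mass_conservation}, the product $(\rho\circ\BM)\,\Cj_{\BM}$ equals $\rho_0(\widehat{\Bx})$. The right-hand side therefore becomes $\int_{\Omega_0}\rho_0\,\hat{\psi}\,\mathrm{d}\widehat{\Bx}$, in which the weight $\rho_0$ is independent of $t$.

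Next I differentiate in $t$. Since $\Omega_0$ is fixed and $\rho_0\in L^\infty(\Omega_0)$ by \eqref{eq:material_bounds}, differentiation under the integral sign is justified, say by dominated convergence, given that $\partial_t\hat{\psi}$ is bounded on $\Omega_0\times[0,T]$ for sufficiently smooth $\psi$. This yields $\int_{\Omega_0}\rho_0\,\partial_t\hat{\psi}\,\mathrm{d}\widehat{\Bx}$. By \eqref{eq:material_lagrangian}, $\partial_t\hat{\psi}(\widehat{\Bx},t)=\frac{d\psi}{dt}(\Bx,t)$. I then write $\rho_0=(\rho\circ\BM)\,\Cj_{\BM}$ once more and push forward by $\BM(\cdot,t)$ to return to $\int_{\Omega(t)}\rho\frac{d\psi}{dt}\,\mathrm{d}\Bx$, which is \eqref{eq:mass_transport}.

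The argument deliberately avoids the Liouville formula \eqref{eq:jacobian_evolution} and the incompressibility constraint, using only \eqref{eq:lagrangian_mass_conservation}. This matters for two reasons:
\begin{itemize}
\item the same computation will carry over to the discrete setting, where $\Cj_{\BM}\neq1$;
\item it remains valid when $\rho_0$ jumps across $\Gamma_0$, since no spatial derivative of $\rho$ is ever taken.
\end{itemize}
The only delicate point is the regularity needed to differentiate under the integral. Here $\rho_0$ is merely bounded, but it does not depend on $t$, so only smoothness of $\hat{\psi}$ in $t$ is required. Smoothness of $\psi$ on $\Omega(t)$, together with $\BM$ being a $C^1$-in-time diffeomorphism, gives this. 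I would state that assumption explicitly as the interpretation of ``sufficiently smooth''.
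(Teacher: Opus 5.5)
Your proposal is correct and matches the paper's proof step for step. Both pull the integral back to $\Omega_0$, use the strong mass-conservation principle to replace $(\rho\circ\BM)\,\Cj_{\BM}$ by the time-independent $\rho_0$, differentiate under the integral using the material-derivative identity, and push forward again. Your explicit justification for differentiating under the integral sign, which rests only on $\rho_0\in L^\infty$ and the time-regularity of $\hat{\psi}$, is a helpful detail that the paper leaves implicit.
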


\begin{proof}
    Define by $\hat{\psi}(\widehat{\Bx},t) = \psi(\Bx,t)$ with $\Bx=\BM(\widehat{\boldsymbol{x}},t)$.
    Using \eqref{eq:lagrangian_mass_conservation}, we obtain
	\begin{align*}
		\int_{\Omega(t)}
		\rho(\Bx,t)\psi(\Bx,t)\,\mathrm{d}\Bx
		&=
		\int_{\Omega_0}
		\rho\bigl(
		\BM(\widehat{\boldsymbol{x}},t),t
		\bigr)
		\hat{\psi}\bigl(
		\widehat{\Bx},t
		\bigr)
		\Cj_{\BM}(\widehat{\boldsymbol{x}},t)
		\,\mathrm{d}\widehat{\boldsymbol{x}}
		\\
		&=
		\int_{\Omega_0}
		\rho_0(\widehat{\boldsymbol{x}})
		\hat{\psi}\bigl(
		\widehat{\Bx},t
		\bigr)
		\,\mathrm{d}\widehat{\boldsymbol{x}} .
	\end{align*}
	Differentiating with respect to time and applying
	\eqref{eq:material_lagrangian} give
\begin{align*}
	\frac{\mathrm{d}}{\mathrm{d}t}
	\int_{\Omega(t)}
\rho(\Bx,t)\psi(\Bx,t)\,\mathrm{d}\Bx
	&=
	\int_{\Omega_0}
	\rho_0(\widehat{\boldsymbol{x}})
	\frac{\partial}{\partial t}
	\hat{\psi}\bigl(\widehat{\Bx},t\bigr)
	\,\mathrm{d}\widehat{\boldsymbol{x}}
	\\
	&=
	\int_{\Omega_0}
	\frac{\rho_0(\widehat{\boldsymbol{x}})}{\Cj_{\BM}}\frac{\partial}{\partial t}\hat{\psi}\bigl(\widehat{\Bx},t\bigr)
	\Cj_{\BM}
	\,\mathrm{d}\widehat{\boldsymbol{x}}
	\\
	&=
	\int_{\Omega(t)}
	\rho\frac{d \psi}{d t}\,\mathrm{d}\Bx .
\end{align*}
The last equality follows from \eqref{eq:material_lagrangian} and \eqref{eq:lagrangian_mass_conservation}.
This proves \eqref{eq:mass_transport}.
\end{proof}

For $t\in[0,T]$, define the kinetic energy and the dissipation by
\begin{equation}
	\label{eq:continuous_energy_dissipation}
	\begin{aligned}
		\mathcal{E}(t)
		:=
		\frac{1}{2}
		\int_{\Omega(t)}
		\rho|\Bu|^2\,\mathrm{d}\Bx,\qquad
		\mathcal{D}(t)
		:=
		\frac{1}{\mathrm{Re}}
		\left(
		2\nu\boldsymbol{D}(\Bu),
		\boldsymbol{D}(\Bu)
		\right)_{\Omega(t)}
		+
		\alpha
		\left(
		\sigma^{-1}\BJ,
		\BJ
		\right)_{\Omega(t)} .
	\end{aligned}
\end{equation}

\begin{theorem}
	\label{thm:continuous_energy}
	Let $\rho$ be determined by
	\eqref{eq:lagrangian_mass_conservation}, and let
	$(\BJ,\Bu,\phi,p)$ be a sufficiently regular solution of
	\eqref{eq:model_2st}. Then, for  $t\in(0,T)$,
	\begin{equation}
		\label{eq:continuous_energy_law}
		\frac{d\mathcal{E}}{d t}
		+
		\mathcal{D}(t)
		=
		\left(
		\rho\Bf_u,
		\Bu
		\right)_{\Omega(t)} .
	\end{equation}
	In particular, if $\Bf_u=\boldsymbol{0}$, then
	\begin{equation}
		\label{eq:continuous_energy_identity}
		\mathcal{E}(t)
		+
		\int_0^t
		\mathcal{D}(s)\,\mathrm{d}s
		=
		\mathcal{E}(0),
		\qquad
		t\in[0,T],
	\end{equation}
	and consequently
	$\mathcal{E}(t)\leq\mathcal{E}(0)$.
\end{theorem}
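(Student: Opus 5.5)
The plan is to test the weak formulation \eqref{eq:model_2st} with the solution itself and then let the electromagnetic coupling terms cancel. Concretely, I will take $\boldsymbol{v}=\Bu(t)$ in \eqref{eq:model_2st:b}, $\boldsymbol{\varphi}=\BJ(t)$ in \eqref{eq:model_2st:a}, $r=\phi(t)$ in \eqref{eq:model_2st:c}, and $q=p(t)$ in \eqref{eq:model_2st:d}. These choices are admissible because the solution lies in $\boldsymbol{W}(t)\times\boldsymbol{V}(t)\times S(t)\times Q(t)$.

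First, the kinetic term. Apply Lemma~\ref{lem:mass_transport} componentwise to $\psi=\tfrac12|\Bu|^2$; this is allowed because $\rho$ satisfies \eqref{eq:lagrangian_mass_conservation}. Since $\frac{d}{dt}\tfrac12|\Bu|^2=\frac{d\Bu}{dt}\cdot\Bu$, it gives
\[
\frac{d\mathcal{E}}{dt}=\Bigl(\rho\frac{d\Bu}{dt},\Bu\Bigr)_{\Omega(t)}.
\]
Second, the constraint terms. The pressure term $(p,\nabla\cdot\Bu)_{\Omega(t)}$ vanishes by \eqref{eq:model_2st:d} with $q=p$. For the potential term, $(\phi,\nabla\cdot\BJ)_{\Omega(t)}=0$ follows from \eqref{eq:model_2st:c} with $r=\phi$. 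The only subtlety is that $Q(t)$ consists of mean-zero functions, but $p\in Q(t)$ already, so this causes no difficulty.

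Third, the coupling terms. With $\boldsymbol{\varphi}=\BJ$, \eqref{eq:model_2st:a} reduces to $(\sigma^{-1}\BJ,\BJ)_{\Omega(t)}=(\Bu\times\boldsymbol{B},\BJ)_{\Omega(t)}$. The scalar triple product identity gives $(\BJ\times\boldsymbol{B})\cdot\Bu=-(\Bu\times\boldsymbol{B})\cdot\BJ$, hence
\[
-\alpha(\BJ\times\boldsymbol{B},\Bu)_{\Omega(t)}=\alpha(\Bu\times\boldsymbol{B},\BJ)_{\Omega(t)}=\alpha(\sigma^{-1}\BJ,\BJ)_{\Omega(t)}.
\]
This cancellation is the key step, and it is where the Stuart number $\alpha$ must be tracked: equation (a) carries no $\alpha$, so it has to be multiplied by $\alpha$ before it is combined with the momentum equation. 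Substituting everything into \eqref{eq:model_2st:b} then yields exactly \eqref{eq:continuous_energy_law}, with $\mathcal{D}$ as defined in \eqref{eq:continuous_energy_dissipation}.

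For the corollary, set $\Bf_u=\boldsymbol{0}$ and integrate \eqref{eq:continuous_energy_law} over $(0,t)$. This uses that $\mathcal{E}$ is absolutely continuous, which holds under the regularity assumption, and gives \eqref{eq:continuous_energy_identity}. Finally, $\mathcal{D}\ge 0$ because $\nu>0$ and $\sigma>0$ by \eqref{eq:material_bounds} and \eqref{eq:material_coefficient_transport}, so $\mathcal{E}(t)\le\mathcal{E}(0)$. Beyond the sign and scaling bookkeeping in the coupling cancellation, the only point needing care is justifying the time differentiation of $\mathcal{E}$ on the moving domain, and Lemma~\ref{lem:mass_transport} handles that.
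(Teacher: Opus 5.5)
Your proposal is correct and follows essentially the same route as the paper: you test \eqref{eq:model_2st} with $(\BJ,\Bu,\phi,p)$, cancel the coupling terms via the scalar triple product identity (after scaling the Ohm's-law equation by $\alpha$), and identify the kinetic term through Lemma~\ref{lem:mass_transport} with $\psi=\tfrac12|\Bu|^2$. Your time integration and the sign argument for $\mathcal{D}$ also match the paper, which likewise implicitly uses $\mathrm{Re}>0$ and $\alpha\ge 0$.
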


\begin{proof}
	For fixed $t\in(0,T)$, choose
	$\boldsymbol{\varphi}=\BJ$ in \eqref{eq:model_2st:a} and
	$r=\phi$ in \eqref{eq:model_2st:c}. Then
	\begin{equation}\label{eq:Jxb}
	\left(
	\sigma^{-1}\BJ,
	\BJ
	\right)_{\Omega(t)}
	-
	\left(
	\Bu\times\boldsymbol{B},
	\BJ
	\right)_{\Omega(t)}
	=
	0.
	\end{equation}
	Similarly, choosing $\boldsymbol{v}=\Bu$ in
	\eqref{eq:model_2st:b} and $q=p$ in
	\eqref{eq:model_2st:d} gives
	\begin{equation}\label{eq:momentumtest}
	\left(
	\rho\frac{d\Bu}{dt},
	\Bu
	\right)_{\Omega(t)}
	+
	\frac{1}{\mathrm{Re}}
	\left(
	2\nu\boldsymbol{D}(\Bu),
	\boldsymbol{D}(\Bu)
	\right)_{\Omega(t)}
	-
	\alpha
	\left(
	\BJ\times\boldsymbol{B},
	\Bu
	\right)_{\Omega(t)}
	=
	\left(
	\rho\Bf_u,
	\Bu
	\right)_{\Omega(t)} .
	\end{equation}
	Using the scalar triple-product identity
	\[
	\left(
	\Bu\times\boldsymbol{B},
	\BJ
	\right)_{\Omega(t)}
	=
	-
	\left(
	\BJ\times\boldsymbol{B},
	\Bu
	\right)_{\Omega(t)},
	\]
	from \eqref{eq:Jxb}, we have
	\[
	-
	\left(
	\BJ\times\boldsymbol{B},
	\Bu
	\right)_{\Omega(t)}
	=
	\left(
	\sigma^{-1}\BJ,
	\BJ
	\right)_{\Omega(t)} .
	\]
	Substituting this relation into \eqref{eq:momentumtest} yields
	\[
	\left(
	\rho\frac{d\Bu}{dt},
	\Bu
	\right)_{\Omega(t)}
	+
	\frac{1}{\mathrm{Re}}
	\left(
	2\nu\boldsymbol{D}(\Bu),
	\boldsymbol{D}(\Bu)
	\right)_{\Omega(t)}
	+
	\alpha
	\left(
	\sigma^{-1}\BJ,
	\BJ
	\right)_{\Omega(t)}
	=
	\left(
	\rho\Bf_u,
	\Bu
	\right)_{\Omega(t)} .
	\]
	
	Applying Lemma~\ref{lem:mass_transport} with
	$\psi=\frac{1}{2}|\Bu|^2$ yields
	\[
	\left(\rho\frac{d\Bu}{dt},\Bu\right)_{\Omega(t)}
	= \frac{1}{2}\int_{\Omega(t)}
    \rho\left(\frac{\partial |\Bu|^2}{\partial t} + \Bu\cdot\nabla|\Bu|^2\right)\,\mathrm{d}\Bx
    =\frac{1}{2}
	\int_{\Omega(t)}
	\rho\frac{d |\Bu|^2 }{d t}\,\mathrm{d}\Bx
	=\frac{d \mathcal{E}}{d t}.
	\]
	Substitution into the preceding power identity proves
	\eqref{eq:continuous_energy_law}.	
	If $\Bf_u=\boldsymbol{0}$, integrating
	\eqref{eq:continuous_energy_law} over $(0,t)$ gives
	\eqref{eq:continuous_energy_identity}. The bounds in
	\eqref{eq:material_bounds}, together with
	\eqref{eq:material_coefficient_transport},
	\eqref{eq:lagrangian_mass_conservation}, and $\Cj_{\BM}>0$,
	imply that $\rho>0$, $\sigma>0$, and $\nu>0$ almost everywhere.
	Hence, $\mathcal{E}(t)\geq0$ and $\mathcal{D}(t)\geq0$ for
	$\mathrm{Re}>0$ and $\alpha\geq0$, which proves	$\mathcal{E}(t)\leq\mathcal{E}(0)$.
\end{proof}

\section{High-order charge-conservative mixed finite element method}\label{sec:scheme}
In this section, we develop a high-order, charge-conservative, and
unconditionally energy-stable finite element method on moving
curvilinear meshes. The material derivative of the velocity is
discretized by the second-order backward differentiation formula
(BDF2), while the discrete flow map is advanced by the explicit
second-order Adams--Bashforth (AB2) method. For the spatial
discretization, the current density is approximated by high-order
parametric BDM elements \cite{fg16,LI2025}, whereas the velocity and pressure are
approximated by high-order isoparametric Taylor--Hood elements with grad-div stabilization \cite{or04}.
Moreover, we rigorously prove the discrete energy stability of the fully discrete scheme.

\subsection{Spatial semidiscretization}
\label{subsec:semidiscrete}
Let $\Omega_{h}^0$ be an
approximation of the initial configuration $\Omega_0$, which is covered by
a fixed, conforming curvilinear tetrahedral mesh
$\mathcal{T}_{h}^0$. For a fixed integer $k\geq1$, each element
$K_0\in\mathcal{T}_{h}^0$ is the image of the reference tetrahedron
$\widehat K$ under an orientation-preserving polynomial
diffeomorphism
\[
\BF_{K_0}:
\widehat K\longrightarrow K_0,
\qquad
\BF_{K_0}
\in \BP_{k+1}(\widehat{K})\triangleq
[P_{k+1}(\widehat K)]^3.
\]
Here $P_r(\widehat K)$ denotes the space of scalar
polynomials of total degree at most $r$, for any integer $r\geq0$.
The element maps are assumed to be compatible across common faces
and uniformly regular in the standard isoparametric sense; see, e.g.,
\cite{bre08,Lenoir1986}.

For typical two-phase flow, $\Gamma_0\neq\varnothing$, let
$\Gamma_{h,0}\Subset\Omega_{h}^0$ be a discrete surface
approximating $\Gamma_0$. We assume that
$\mathcal{T}_{h}^0$ is fitted to $\Gamma_{0}$.
We further assume that $\Gamma_{h,0}$ partitions
$\Omega_{h}^0$ into two Lipschitz material subdomains satisfying
\[
\Omega_{h}^0\setminus\Gamma_{h,0}
=
\Omega_{1,h}^0\cup\Omega_{2,h}^0,
\qquad
\Omega_{1,h}^0\cap\Omega_{2,h}^0
=
\varnothing,
\qquad
\Gamma_{h,0}
=
\overline{\Omega}_{1,h,0}
\cap
\overline{\Omega}_{2,h,0}.
\]
Let $\widehat{\Bz}$ denote the local coordinate on the reference
tetrahedron $\widehat K$. For each
$K_0\in\mathcal{T}_{h,0}$, set
\[
D\boldsymbol{F}_{K_0}(\widehat{\Bz})
:=
\nabla_{\widehat{\Bz}}
\boldsymbol{F}_{K_0}(\widehat{\Bz}),
\qquad
\mathcal{J}_{K_0}(\widehat{\Bz})
\triangleq
\det(D\boldsymbol{F}_{K_0})>0.
\]

Let $\mathbf{BDM}_k(\widehat K)$ be the standard local BDM basis spaces \cite{boffibook}.
The parametric finite element spaces on the initial computational mesh $\Ct_h^0$
are defined by
\begin{align}
	\BW_{h}^0	&:=
	\left\{
	\boldsymbol{\varphi}_h
	\in	\boldsymbol{H}
	\bigl(\operatorname{div};\Omega_{h}^0\bigr)	:\left.
	\boldsymbol{\varphi}_h
	\right|_{K_0}\circ \BF_{K_0} = \frac{D\BF_{K_0}}{\Cj_{K_0}}\widehat{\boldsymbol{\varphi}},\quad
    \forall \widehat{\boldsymbol{\varphi}} \in \mathbf{BDM}_k(\widehat K),~~K_0\in\mathcal{T}_{h}^0
	\right\},	\label{eq:initial_current_density_space}\\
	\widetilde{\BV}_{h}^0	&:=
	\left\{	\boldsymbol{v}_h\in	\boldsymbol{H}^1(\Omega_{h}^0)
	:	\left.	\boldsymbol{v}_h
	\right|_{K_0}\circ\BF_{K_0}= \widehat{\Bv},\quad \forall \widehat{\Bv}\in\BP_{k+1}(\widehat K)
	,~~ K_0\in\mathcal{T}_{h}^0\right\},
    \BV_h^0 = \widetilde{\BV}_{h}^0 \cap \BH_0^1(\Omega_h^0),\label{eq:initial_velocity_space}\\
	S_{h}^0	&:=	\left\{	r_h	\in	L^2(\Omega_{h}^0):	\left.	r_h\right|_{K_0}\circ \BF_{K_0} = \widehat{r},
    \quad \forall \widehat{r}\in P_{k-1}(\widehat{K}),~~K_0\in\mathcal{T}_{h}^0\right\},
    \label{eq:initial_potential_space}	\\
	Q_{h}^0	&:=	\left\{
	q_h\in H^1(\Omega_{h}^0):\left.q_h\right|_{K_0}\circ\BF_{K_0} = \hat{q},\quad\forall \hat{q}	\in
	P_k(\widehat{K}),~~ K_0\in\mathcal{T}_{h}^0\right\}.\label{eq:initial_pressure_space}
\end{align}
The spaces $\BV_{h}^0$ and $Q_{h}^0$ form the classical isoparametric
Taylor--Hood pair of polynomial degrees $k+1$ and $k$, respectively.
The space $\BW_{h}^0$ is parametric BDM finite element space of order $k$ (more details can be seen in \cite{fg16,LI2025}).
The transformation used in \eqref{eq:initial_current_density_space} is the so-called Piola transformation.
For $t\in[0,T]$, let the semi-discrete flow map
$\BM_h(\cdot,t):\Omega_{h}^0 \longrightarrow \bbR^3 $ satisfy $\BM_h\in C^1\bigl([0,T];\widetilde{\BV}_{h}^0\bigr).$
For each $K_0 \in \mathcal{T}_{h}^0$, define
\[K(t) := \BM_h(K_0, t).\]
The current mesh and computational domain $\Omega_h(t)$ are then given by
\begin{equation}
	\label{mappsemi}
	\mathcal{T}_h(t)
	:=
	\left\{
	K(t) = \BM_h(K_0,t): K_0 \in \Ct_h^0\right\},	\quad\Omega_h(t) 	:= \cup_{K\in \Ct_h(t)} \overline{K}.
\end{equation}
Then corresponding discrete deformation gradient and Jacobian
determinant are defined by
\begin{equation}
	\label{eq:discrete_deformation_jacobian}
	D\boldsymbol{M}_h(\Bx_h^0,t)
	:=	\nabla_{\Bx_h^0}\BM_h(\Bx_h^0,t),\quad	\mathcal{J}_{\BM_h}(\Bx_h^0,t)
	:=	\det(D\boldsymbol{M}_h)	\quad	\text{for }\Bx_h^0\in\Omega_{h}^0.
\end{equation}
For each $t\in[0,T]$, define the parametric finite element spaces on $\Ct_h(t)$ by
\begin{align}
	\BW_h(t)	&:=	\left\{	\boldsymbol{\varphi}_h	\in	\boldsymbol{H}
	\bigl(\operatorname{div};\Omega_h(t)\bigr)
	:	\boldsymbol{\varphi}_h\circ\BM_h
	=	\frac{D\boldsymbol{M}_h}{\mathcal{J}_{\BM_h}}
	\boldsymbol{\varphi}_{h}^0,	\quad	\boldsymbol{\varphi}_{h}^0 \in\BW_{h}^0	\right\},\label{eq:current_current_space}
	\\
	\widetilde{\BV}_h(t)	&:=	\left\{	\boldsymbol{v}_h
	\in	\boldsymbol{H}^1(\Omega_h(t))	:	\boldsymbol{v}_h\circ\BM_h
	=	\boldsymbol{v}_{h}^0,	\quad	\boldsymbol{v}_{h}^0\in\widetilde{\BV}_{h}^0
	\right\},	\BV_h(t) = \widetilde{\BV}_h(t)\cap\BH_0^1(\Omega_h(t)),\label{eq:current_velocity_space}	\\
	S_h(t)	&:=	\left\{	r_h\in L^2(\Omega_h(t))	:	r_h\circ\BM_h	=
	r_{h}^0,\quad r_{h}^0 \in S_{h}^0	\right\},\label{eq:current_potential_space}	\\
	Q_h(t)	&:=	\left\{	q_h\in H^1(\Omega_h(t))	:	q_h\circ\BM_h	=
	q_{h}^0,\quad	q_{h}^0\in {Q}_{h}^0	\right\}.	\label{eq:current_pressure_space}
\end{align}
Let $\rho_{h}^0$, $\sigma_{h}^0$ and $\nu_{h}^0$ be given
approximations of $\rho_0$, $\sigma_0$ and $\nu_0$.
Their current values on $\Omega_h(t)$ are defined by
\begin{equation}
	\label{eq:discrete_material_transport}
	\rho_h\circ \BM_h	= \frac{\rho_{h}^0}{\Cj_{\BM_h}},
	\quad	\sigma_h\circ\BM_h =\sigma_{h}^0,
	\quad	\nu_h\circ\BM_h	= \nu_{h}^0.
\end{equation}
Since in the Taylor--Hood mixed finite element, the discrete velocity is not precisely divergence-free.
Then $\mathcal{J}_{\BM_h}$ will not be identically one.
However in the discrete case, we preserve the Jacobian factor $\Cj_{\BM_h}$
in the density update \eqref{eq:semidiscrete_kinematics}
to enforce discrete mass conservation and the energy stability.

Letting $\Bx_h \in \Omega_h(t)$, for a time-dependent discrete field $\psi_h(\Bx_h,t)$,
define the semi-discrete material derivative associated with $\Bu_h$ by
\begin{equation}
	\label{eq:semidiscrete_material_derivative}
	\frac{d^{*} \psi_h}{d t}
	:=\frac{\partial \psi_h}{\partial t} + \Bu_h\cdot\nabla\psi_h,\quad \Bu_h = \frac{\partial\BM_h}{\partial t}.
\end{equation}
Then spatially semi-discrete problem is to find the discrete flow map $\BM_h$, the density $\rho_h$ and
\[\bigl(\BJ_h,\Bu_h,\phi_h,p_h\bigr)
\in \BW_h(t)\times\BV_h(t)\times S_h(t)\times Q_h(t),\]
such that
\begin{equation}\label{eq:semidiscrete_kinematics}
\frac{\partial\BM_h}{\partial t}	=	\Bu_h, \quad \rho_h = \frac{\rho_h^0}{\Cj_{\BM_h}},
\end{equation}
and
\begin{subequations}
	\label{eq:semi_1st}
	\begin{align}
		(\sigma_h^{-1}\BJ_h,\boldsymbol{\varphi}_h)_{\Omega_h(t)}
		-
		(\phi_h,\nabla\cdot\boldsymbol{\varphi}_h)_{\Omega_h(t)}
		-
		(\Bu_h\times\boldsymbol{B},
		\boldsymbol{\varphi}_h)_{\Omega_h(t)}
		&=
		0,
		\label{eq:semi_1st:a}
		\\
		\left(\rho_h\frac{d^{*}\Bu_h}{dt},\boldsymbol{v}_h\right)_{\Omega_h(t)}
		+\frac{1}{\mathrm{Re}}\left(2\nu_h\boldsymbol{D}(\Bu_h),\boldsymbol{D}(\boldsymbol{v}_h)\right)_{\Omega_h(t)}
		+\beta(\nabla\cdot\Bu_h,\nabla\cdot\boldsymbol{v}_h)_{\Omega_h(t)} \notag\\
		\quad -
		(p_h,\nabla\cdot\boldsymbol{v}_h)_{\Omega_h(t)}	-\alpha	(\BJ_h\times\boldsymbol{B},\boldsymbol{v}_h)_{\Omega_h(t)}
		&=(\rho_h\boldsymbol{f}_u,\boldsymbol{v}_h)_{\Omega_h(t)},\label{eq:semi_1st:b}
		\\
		-(\nabla\cdot\BJ_h,r_h)_{\Omega_h(t)}	&=	0,	\label{eq:semi_1st:c}	\\
		-(\nabla\cdot\Bu_h,q_h)_{\Omega_h(t)}	&=	0,	\label{eq:semi_1st:d}
	\end{align}
\end{subequations}
holds for all
$
\bigl(\boldsymbol{\varphi}_h,\boldsymbol{v}_h,r_h,q_h\bigr)
\in\BW_h(t)\times\BV_h(t)\times S_h(t) \times Q_h(t).
$
The initial velocity is prescribed by
$\Bu_h(\cdot,0)=\Bu_{h}^0$, where
$\Bu_{h}^0\in\BV_{h}^0$ is a suitable approximation of $\Bu_0$.
Here, $\beta \sim O(1)$ is the grad--div stabilization parameter \cite{or04}.

\subsection{Fully discrete scheme and algebraic formulation}
\label{subsec:fully_discrete}
Let $t^n=n\Delta t$, $n=0,1,\ldots,N$, with $\Delta t=T/N$.
Denote by $\BM_h^n \in \widetilde{\BV}_h^0$ the fully discrete approximations
to the semi-discrete flow map $\BM_h(\Bx_h^0,t^n)$.
For $n=1,2,\ldots,N$, define the discrete mesh and domain $\Omega_h^n$ by
\[\Ct_h^n := \bigl\{ K^n = \BM_h^n(K_0): K_0\in\Ct_{h}^0\bigr\},\quad \Omega_h^n:=\cup_{K^n\in \Ct_h^n} \overline{K}^n.\]
Set
\[D\BM_h^n:=\nabla_{\Bx_h^0}\BM_h^n,\quad \mathcal{J}_{\BM_h^n}:=\det(D\BM_h^n).\]
At each time level $t^n$, let
$\BW_h^n$, $\BV_h^n$, $S_h^n$ and $Q_h^n$
denote the finite element spaces on $\Omega_h^n$ obtained from
the corresponding initial spaces by the Piola transformation and composition
mappings, which are defined by
\begin{align}
	\BW_h^n	&:=	\left\{	\boldsymbol{\varphi}_h^n	\in	\boldsymbol{H}
	\bigl(\operatorname{div};\Omega_h^n\bigr)
	:	\boldsymbol{\varphi}_h\circ\BM_h^n	=	\frac{D\boldsymbol{M}_h^n}{\mathcal{J}_{\BM_h^n}}
	\boldsymbol{\varphi}_{h}^0,	\quad	\boldsymbol{\varphi}_{h}^0 \in\BW_{h}^0	\right\},\\
	\widetilde{\BV}_h^n	&:=	\left\{	\boldsymbol{v}_h^n	\in	\boldsymbol{H}^1(\Omega_h^n):
    \boldsymbol{v}_h^n\circ\BM_h^n	=	\boldsymbol{v}_{h}^0,	\quad	\boldsymbol{v}_{h}^0\in
    \widetilde{\BV}_{h}^0\right\}, \BV_h^n = \widetilde{\BV}_h^n\cap \BH_0^1(\Omega_h^n),\\
	S_h^n	&:=	\left\{	r_h^n\in L^2(\Omega_h^n)	:	r_h^n\circ\BM_h^n	=
	r_{h}^0,\quad r_{h}^0 \in S_{h}^0	\right\},\\
	{Q}_h^n	&:=	\left\{	q_h^n\in H^1(\Omega_h^n)	:	q_h^n\circ\BM_h^n	=
	q_{h}^0,\quad	q_{h}^0\in {Q}_{h}^0	\right\}.
\end{align}
The fully discrete density $\rho_h^n$ and material parameters at $t^n$ are computed by
\begin{equation}
	\label{eq:fully_discrete_material}
	\rho_h^n\circ\BM_h^n	=\frac{\rho_{h}^0}{\Cj_{\BM_h^n}},
	\qquad
	\sigma_h^n\circ\BM_h^n
	=
	\sigma_{h}^0,
	\qquad
	\nu_h^n\circ\BM_h^n
	=
	\nu_{h}^0.
\end{equation}
Let $\Bu_h^n \in \BV_h^n$ be the fully discrete approximation of $\Bu_h(\cdot,t)$.
To advance the discrete flow map in time, we apply the second-order
Adams--Bashforth (AB2) method to the semidiscrete kinematic relation
\eqref{eq:semidiscrete_kinematics}. For $n\geq1$, this gives
\begin{equation}
	\label{eq:mesh_AB2}
	\frac{\BM_h^{n+1}-\BM_h^n}{\Delta t}
	=
	\frac{3}{2}\Bu_h^n\circ\BM_h^n
	-
	\frac{1}{2}\Bu_h^{n-1}\circ\BM_h^{n-1}.
\end{equation}
Since $\Bu_h^n\circ\BM_h^n$ and $\Bu_h^{n-1}\circ\BM_h^{n-1}$
belong to $\BV_{h}^0$,
the Adams--Bashforth combination in \eqref{eq:mesh_AB2}
is well defined in $\Omega_h^0$ and $\BV_h^0$.
With respect to a fixed basis functions of $\BV_{h}^0$,
\eqref{eq:mesh_AB2} reduces to the standard AB2 update
for the degrees of freedom of $\BM_h^{n+1}$.
The additional starting value $\Bu_h^1$ required by the two-step
AB2 scheme is obtained using the backward Euler method.

To operate velocity approximations defined on different discrete
meshes, for $0\leq m<n\leq N$, define the inter-time map
\begin{equation}
	\label{eq:intertime_map}
	\BM_h^{m,n}
	:=
	\BM_h^n\circ(\BM_h^m)^{-1}
	:	\Omega_h^m\longrightarrow\Omega_h^n,
\end{equation}
and the corresponding transformed velocity by
\begin{equation}
	\label{eq:transported_velocity}
	\widetilde{\Bu}_h^{\,n,m}
	:=	\Bu_h^m\circ(\BM_h^{m,n})^{-1}	\in\BV_h^n.
\end{equation}
The BDF2 approximation of the material derivative at $t^{n+1}$ is
then defined on $\Omega_h^{n+1}$ by
\begin{equation}
	\label{eq:BDF2_material_derivative}
	\left(\frac{\delta \Bu_h}{\delta t}\right)^{n+1}	:=	\frac{
		3\Bu_h^{n+1}-4\widetilde{\Bu}_h^{\,n+1,n}+\widetilde{\Bu}_h^{\,n+1,n-1}}{2\Delta t},
	\qquad n=1,\ldots,N-1.
\end{equation}
With $\BM_h^{n+1}$ determined by \eqref{eq:mesh_AB2}, for
$n=1,\ldots,N-1$, the fully discrete problem at $t^{n+1}$ is to find
\[
\bigl(
\BJ_h^{n+1},
\Bu_h^{n+1},
\phi_h^{n+1},
p_h^{n+1}
\bigr)
\in
\BW_h^{n+1}
\times
\BV_h^{n+1}
\times
S_h^{n+1}
\times
Q_h^{n+1}
\]
such that for all
$\boldsymbol{\varphi}_h^{n+1}\in\BW_h^{n+1}$,
$\boldsymbol{v}_h^{n+1}\in\BV_h^{n+1}$,
$r_h^{n+1}\in S_h^{n+1}$ and $q_h^{n+1}\in Q_h^{n+1}$,
\begin{subequations}
	\label{eq:fully_1st}
	\begin{align}
		&\bigl(
		(\sigma_h^{n+1})^{-1}\BJ_h^{n+1},
		\boldsymbol{\varphi}_h^{n+1}
		\bigr)_{\Omega_h^{n+1}}-
		\bigl(
		\phi_h^{n+1},
		\nabla\cdot\boldsymbol{\varphi}_h^{n+1}
		\bigr)_{\Omega_h^{n+1}}
		-
		\bigl(
		\Bu_h^{n+1}\times\boldsymbol{B},
		\boldsymbol{\varphi}_h^{n+1}
		\bigr)_{\Omega_h^{n+1}}
		=		0,		\label{eq:fully_1st:a}		\\
		&\bigl(
		\rho_h^{n+1}\left(\frac{\delta \Bu_h}{\delta t}\right)^{n+1},
		\boldsymbol{v}_h^{n+1}
		\bigr)_{\Omega_h^{n+1}}
		+
		\frac{1}{\mathrm{Re}}
		\bigl(
		2\nu_h^{n+1}\boldsymbol{D}(\Bu_h^{n+1}),
		\boldsymbol{D}(\boldsymbol{v}_h^{n+1})
		\bigr)_{\Omega_h^{n+1}}
		+		\beta
		\bigl(
		\nabla\cdot\Bu_h^{n+1},
		\nabla\cdot\boldsymbol{v}_h^{n+1}
		\bigr)_{\Omega_h^{n+1}}
		\notag\\
		&		-		\bigl(p_h^{n+1},\nabla\cdot\boldsymbol{v}_h^{n+1}\bigr)_{\Omega_h^{n+1}}
		-	\alpha	\bigl(\BJ_h^{n+1}\times\boldsymbol{B},\boldsymbol{v}_h^{n+1}\bigr)_{\Omega_h^{n+1}}
		=
		\bigl(
		\rho_h^{n+1}\boldsymbol{f}_u^{n+1},
		\boldsymbol{v}_h^{n+1}
		\bigr)_{\Omega_h^{n+1}},
		\label{eq:fully_1st:b}		\\
		-
		&\bigl(
		\nabla\cdot\BJ_h^{n+1},
		r_h^{n+1}
		\bigr)_{\Omega_h^{n+1}}
		=		0,		\label{eq:fully_1st:c}
		\\
		-
		&\bigl(
		\nabla\cdot\Bu_h^{n+1},
		q_h^{n+1}
		\bigr)_{\Omega_h^{n+1}}
		=		0,		\label{eq:fully_1st:d}
	\end{align}
\end{subequations}
where $\boldsymbol{f}_u^{n+1}:=\boldsymbol{f}_u(\cdot,t^{n+1})$.

\begin{lemma}[Exact charge conservation]
	\label{lem:fully_charge}
	For $n=1,\ldots,N-1$, the fully discrete current density
	$\BJ_h^{n+1}$ determined by \eqref{eq:fully_1st} satisfies
	\[
	\nabla\cdot\BJ_h^{n+1}=0.
	\]
\end{lemma}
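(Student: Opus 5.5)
The plan is the standard argument that $\nabla\cdot\BW_h^{n+1}\subseteq S_h^{n+1}$: once this inclusion is known, we take $r_h^{n+1}=\nabla\cdot\BJ_h^{n+1}$ in \eqref{eq:fully_1st:c} and obtain $\|\nabla\cdot\BJ_h^{n+1}\|_{L^2(\Omega_h^{n+1})}^2=0$, so $\nabla\cdot\BJ_h^{n+1}=0$ almost everywhere. The whole proof therefore reduces to verifying that the parametric BDM space on the moving curved mesh, built through the composite Piola map, has its divergence in the parametric $P_{k-1}$ space $S_h^{n+1}$ defined through composition with $\BM_h^{n+1}$.

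To prove the inclusion we work element by element. For $K^{n+1}=\BM_h^{n+1}(K_0)$ with $K_0=\BF_{K_0}(\widehat K)$, the composite map $\BG:=\BM_h^{n+1}\circ\BF_{K_0}:\widehat K\to K^{n+1}$ is a polynomial map, since $\BM_h^{n+1}\in\widetilde{\BV}_h^0$ is a polynomial of degree $k+1$ in the reference variable. By the chain rule,
\[
\frac{D\BM_h^{n+1}}{\Cj_{\BM_h^{n+1}}}\circ\BF_{K_0}\,\frac{D\BF_{K_0}}{\Cj_{K_0}}=\frac{D\BG}{\det D\BG}.
\]
Hence $\boldsymbol{\varphi}_h|_{K^{n+1}}\circ\BG=\frac{D\BG}{\det D\BG}\widehat{\boldsymbol{\varphi}}$ with $\widehat{\boldsymbol{\varphi}}\in\mathbf{BDM}_k(\widehat K)$, which means that a composition of contravariant Piola transforms is again a contravariant Piola transform. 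The classical Piola identity then gives
\[
(\nabla\cdot\boldsymbol{\varphi}_h)\circ\BG=\frac{1}{\det D\BG}\,\widehat{\nabla}\cdot\widehat{\boldsymbol{\varphi}},
\]
and here $\widehat{\nabla}\cdot\widehat{\boldsymbol{\varphi}}\in P_{k-1}(\widehat K)$. We assume, as is implicit in the scheme, that $\Cj_{\BM_h^{n+1}}>0$, so that $\BM_h^{n+1}$ is an orientation-preserving diffeomorphism on each element.

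The main obstacle is the Jacobian factor $1/\det D\BG$: on curved elements it is not a polynomial, so $\nabla\cdot\BW_h^{n+1}\not\subseteq S_h^{n+1}$ in general, and the direct choice $r_h=\nabla\cdot\BJ_h^{n+1}$ is not admissible. We bypass this with a weighted test function. Let $\widehat g:=\widehat\nabla\cdot\widehat{\BJ}$ on each element, where $\widehat{\BJ}$ is the reference BDM representative of $\BJ_h^{n+1}$, and define $r_h\in S_h^{n+1}$ elementwise by $r_h\circ\BG=\widehat g$. Changing variables in \eqref{eq:fully_1st:c}, the factor $\det D\BG$ from the volume element cancels against $1/\det D\BG$ in the divergence, so \eqref{eq:fully_1st:c} becomes
\[
0=\sum_{K^{n+1}}\int_{\widehat K}|\widehat g|^2\,\mathrm{d}\widehat{\Bz}.
\]
Thus $\widehat g=0$ on every element, and therefore $\nabla\cdot\BJ_h^{n+1}=0$ on $\Omega_h^{n+1}$, using the $\BH(\Div)$-conformity of $\BW_h^{n+1}$, which is built into its definition.

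The remaining routine check is that $r_h$ so defined belongs to $S_h^{n+1}$. This follows because $S_h^{n+1}$ consists of discontinuous functions whose pullbacks by $\BM_h^{n+1}$ lie in $S_h^0$, that is, whose pullbacks to $\widehat K$ by $\BG$ are in $P_{k-1}$.
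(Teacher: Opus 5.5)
Your argument is correct and is essentially the paper's proof. You pull back through the composite map $\BF_{K^{n+1}}=\BM_h^{n+1}\circ\BF_{K_0}$, use the composition rule for contravariant Piola transforms and the Piola divergence identity, and test \eqref{eq:fully_1st:c} with $r_h\in S_h^{n+1}$ defined by $r_h\circ\BF_{K^{n+1}}=\widehat{\nabla}\cdot\widehat{\BJ}$. The Jacobian factors then cancel and $\sum_{K_0}\int_{\widehat K}|\widehat{\nabla}\cdot\widehat{\BJ}|^2\,d\widehat{\Bz}=0$.

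The only flaw is expository: your opening paragraph announces the inclusion $\nabla\cdot\BW_h^{n+1}\subseteq S_h^{n+1}$ as the plan. You then correctly note that this inclusion fails on curved elements, so that paragraph should be replaced by the weighted-test-function argument you actually use.
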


\begin{proof}
	For each $K_0\in\Ct_{h}^0$, set
	\[
	K^{n+1}:=\BM_h^{n+1}(K_0),
	\quad
	\BF_{K^{n+1}}
	:=
	\BM_h^{n+1}\circ\BF_{K_0},
	\]
	and let
	$D\BF_{K^{n+1}}:=\nabla_{\widehat{\Bz}}\BF_{K^{n+1}}$ and
	$\mathcal{J}_{K^{n+1}}:=\det(D\BF_{K^{n+1}})>0$.
	By the construction of $\BW_h^{n+1}$ and the composition rule
	for the contravariant Piola transform, there exists
	$\widehat{\BJ}_{K_0}^{\,n+1}\in\mathbf{BDM}_k(\widehat K)$ such that
	\[
	\left(
	\BJ_h^{n+1}|_{K^{n+1}}
	\right)\circ\BF_{K^{n+1}}
	=
	\mathcal{J}_{K^{n+1}}^{-1}
	D\BF_{K^{n+1}}
	\widehat{\BJ}_{K_0}^{\,n+1}.
	\]
	Hence, by the Piola divergence identity
	\cite{boffibook,fg16,LI2025},
	\[
	\left(
	\nabla\cdot\BJ_h^{n+1}
	\right)\circ\BF_{K^{n+1}}
	=
	\mathcal{J}_{K^{n+1}}^{-1}
	\nabla_{\widehat{\Bz}}\cdot\widehat{\BJ}_{K_0}^{\,n+1}.
	\]
	
	Since
	\[
	\nabla_{\widehat{\Bz}}\cdot\widehat{\BJ}_{K_0}^{\,n+1}
	\in P_{k-1}(\widehat K),
	\]
	Then
	\[
	r_h^{n+1}\circ\BF_{K^{n+1}}
	=
	\nabla_{\widehat{\Bz}}\cdot\widehat{\BJ}_{K_0}^{\,n+1}
	\]
	defines a function $r_h^{n+1}\in S_h^{n+1}$.
	Using this test function in \eqref{eq:fully_1st:c} and changing
	variables elementwise, we obtain
	\[
	\sum_{K_0\in\Ct_{h}^0}
	\int_{\widehat K}
	\left|
	\nabla_{\widehat{\Bz}}\cdot\widehat{\BJ}_{K_0}^{\,n+1}
	\right|^2\,d\widehat{\Bz}	=0.
	\]
	Therefore
	\[
	\nabla_{\widehat{\Bz}}\cdot\widehat{\BJ}_{K_0}^{\,n+1}=0
	\qquad
	\text{in }\widehat K
	\]
	for every $K_0\in\Ct_{h}^0$.
	The Piola divergence identity then yields
	$\nabla\cdot\BJ_h^{n+1}=0$ in $\Omega_h^{n+1}$.
\end{proof}

For the algebraic formulation, let
$\{\boldsymbol{\varphi}_{j}^0\}_{j=1}^{N_J}$,
$\{\boldsymbol{v}_{j}^0\}_{j=1}^{N_u}$,
$\{r_{j}^0\}_{j=1}^{N_\phi}$ and $\{q_{j}^0\}_{j=1}^{N_p}$
be fixed bases of $\BW_{h}^0$, $\BV_{h}^0$, $S_{h}^0$ and $Q_h^0$,
respectively.
The corresponding basis functions on $\Ct_h^n$ satisfy
\[
\boldsymbol{\varphi}_j^n\circ\BM_h^n
=
\mathcal{J}_{\BM_h^n}^{-1}
D\BM_h^n\boldsymbol{\varphi}_{j}^0,
\qquad
\boldsymbol{v}_j^n\circ\BM_h^n
=
\boldsymbol{v}_{j}^0,
\qquad
r_j^n\circ\BM_h^n=r_{j}^0,
\qquad
q_j^n\circ\BM_h^n=q_{j}^0.
\]
By the definition of the inter-time map, we have
\begin{equation}
	\label{eq:transported_velocity_basis}
	\boldsymbol{v}_j^m\circ(\BM_h^{m,n})^{-1}
	=
	\boldsymbol{v}_j^n,
	\qquad
	0\leq m\leq n\leq N.
\end{equation}
Consequently, the density update \eqref{eq:fully_discrete_material} implies
\begin{equation}
	\label{eq:mass_matrix_invariance}
	\boldsymbol{Q}_{\Bu}^n[i,j]
	:=
	\bigl(
	\rho_h^n\boldsymbol{v}_j^n,
	\boldsymbol{v}_i^n
	\bigr)_{\Omega_h^n}
	=
	\bigl(
	\rho_{h}^0\boldsymbol{v}_{j}^0,
	\boldsymbol{v}_{i}^0
	\bigr)_{\Omega_{h}^0},
	\qquad n=0,1,2,\ldots,N .
\end{equation}
Hence the velocity mass matrix is independent of the time level.
Similarly, the Piola divergence identity yields
\begin{equation}
	\label{eq:current_constraint_invariance}
	G[i,j]
	:=
	-
	\bigl(
	r_i^n,
	\nabla\cdot\boldsymbol{\varphi}_j^n
	\bigr)_{\Omega_h^n}
	=
	-
	\bigl(
	r_{i}^0,
	\nabla\cdot\boldsymbol{\varphi}_{j}^0
	\bigr)_{\Omega_{h}^0},
	\qquad n=1,2,\cdots,N.
\end{equation}
Thus the matrix $G$ is also independent of the time level.
At $t^{n+1}$, write
\[
\BJ_h^{n+1}
=
\sum_{j=1}^{N_J}
\xi_{\BJ,j}^{n+1}\boldsymbol{\varphi}_j^{n+1},
\quad
\Bu_h^{n+1}
=
\sum_{j=1}^{N_u}
\xi_{\Bu,j}^{n+1}\boldsymbol{v}_j^{n+1},
\quad
\phi_h^{n+1}
=
\sum_{j=1}^{N_\phi}
\xi_{\phi,j}^{n+1}r_j^{n+1},
\quad
p_h^{n+1}
=
\sum_{j=1}^{N_p}
\xi_{p,j}^{n+1}q_j^{n+1}.
\]
It follows from \eqref{eq:transported_velocity_basis} that
\[
\widetilde{\Bu}_h^{\,n,n+1}
=
\sum_{j=1}^{N_u}
\xi_{\Bu,j}^{n}\boldsymbol{v}_j^{n+1},
\quad
\widetilde{\Bu}_h^{\,n-1,n+1}
=
\sum_{j=1}^{N_u}
\xi_{\Bu,j}^{n-1}\boldsymbol{v}_j^{n+1}.
\]

Define
\[
\begin{aligned}
	\boldsymbol{K}^{n+1}[i,j]
	&:=
	\bigl(
	\boldsymbol{\varphi}_j^{n+1}\times\boldsymbol{B},
	\boldsymbol{v}_i^{n+1}
	\bigr)_{\Omega_h^{n+1}},
	\\
	B^{n+1}[i,j]
	&:=
	-
	\bigl(
	q_i^{n+1},
	\nabla\cdot\boldsymbol{v}_j^{n+1}
	\bigr)_{\Omega_h^{n+1}},
	\\
	\boldsymbol{Q}_{\BJ}^{n+1}[i,j]
	&:=
	\bigl(
	(\sigma_h^{n+1})^{-1}
	\boldsymbol{\varphi}_j^{n+1},
	\boldsymbol{\varphi}_i^{n+1}
	\bigr)_{\Omega_h^{n+1}},
\end{aligned}
\]
and
\[
\begin{aligned}
	\boldsymbol{L}_{\Bu}^{n+1}[i,j]
	&:=
	\frac{1}{\mathrm{Re}}
	\bigl(
	2\nu_h^{n+1}
	\boldsymbol{D}(\boldsymbol{v}_j^{n+1}),
	\boldsymbol{D}(\boldsymbol{v}_i^{n+1})
	\bigr)_{\Omega_h^{n+1}}
	\\
	&\qquad
	+
	\beta
	\bigl(
	\nabla\cdot\boldsymbol{v}_j^{n+1},
	\nabla\cdot\boldsymbol{v}_i^{n+1}
	\bigr)_{\Omega_h^{n+1}},
	\\
	\boldsymbol{b}_{\Bu}^{n+1}[i]
	&:=
	\bigl(
	\rho_h^{n+1}\boldsymbol{f}_u^{n+1},
	\boldsymbol{v}_i^{n+1}
	\bigr)_{\Omega_h^{n+1}}.
\end{aligned}
\]
Since $\sigma_h^{n+1}>0$, $\nu_h^{n+1}>0$, and $\beta > 0$,
$\boldsymbol{Q}_{\BJ}^{n+1}$ is symmetric positive definite,
whereas $\boldsymbol{L}_{\Bu}^{n+1}$ is symmetric positive
semi-definite. Let
$\boldsymbol{\xi}_{\BJ}^{n+1}$,
$\boldsymbol{\xi}_{\Bu}^{n+1}$,
$\boldsymbol{\xi}_{\phi}^{n+1}$, and
$\boldsymbol{\xi}_p^{n+1}$
denote the corresponding coefficient vectors.
Then the fully discrete variational problem
\eqref{eq:fully_1st} is equivalent to the block algebraic system
\begin{subequations}
	\label{eq:fully_2st}
	\begin{align}
		\boldsymbol{Q}_{\BJ}^{n+1}
		\boldsymbol{\xi}_{\BJ}^{n+1}
		+
		(\boldsymbol{K}^{n+1})^T
		\boldsymbol{\xi}_{\Bu}^{n+1}
		+
		G^T\boldsymbol{\xi}_{\phi}^{n+1}
		&=
		\boldsymbol{0},
		\label{eq:fully_2st:a}
		\\
		-\alpha
		\boldsymbol{K}^{n+1}
		\boldsymbol{\xi}_{\BJ}^{n+1}
		+
		\boldsymbol{Q}_{\Bu}^0
		\frac{
			3\boldsymbol{\xi}_{\Bu}^{n+1}
			-
			4\boldsymbol{\xi}_{\Bu}^n
			+
			\boldsymbol{\xi}_{\Bu}^{n-1}
		}{2\Delta t}
		+
		\boldsymbol{L}_{\Bu}^{n+1}
		\boldsymbol{\xi}_{\Bu}^{n+1}
		+
		(B^{n+1})^T
		\boldsymbol{\xi}_p^{n+1}
		&=
		\boldsymbol{b}_{\Bu}^{n+1},
		\label{eq:fully_2st:b}
		\\
		G\boldsymbol{\xi}_{\BJ}^{n+1}
		&=
		\boldsymbol{0},
		\label{eq:fully_2st:c}
		\\
		B^{n+1}\boldsymbol{\xi}_{\Bu}^{n+1}
		&=
		\boldsymbol{0}.
		\label{eq:fully_2st:d}
	\end{align}
\end{subequations}

\subsection{Energy-stability analysis of the fully discrete scheme}
\label{subsec:energy_stability}

\begin{lemma}
	\label{lem:BDF2_algebraic_identity}
	Let $\boldsymbol Q$ be a symmetric positive definite matrix and define
	\[
	\|\boldsymbol{\xi}\|_{\boldsymbol Q}^{2}
	:=
	\boldsymbol{\xi}^{T}\boldsymbol Q\boldsymbol{\xi}.
	\]
	Then, for arbitrary vectors
	$\boldsymbol a$, $\boldsymbol b$, and $\boldsymbol c$,
	\begin{equation}
		\label{eq:BDF2_matrix_identity}
		\begin{aligned}
			2\boldsymbol a^{T}\boldsymbol Q
			(3\boldsymbol a-4\boldsymbol b+\boldsymbol c)=
			\|\boldsymbol a\|_{\boldsymbol Q}^{2}
			-
			\|\boldsymbol b\|_{\boldsymbol Q}^{2}
			+
			\|2\boldsymbol a-\boldsymbol b\|_{\boldsymbol Q}^{2}
			-
			\|2\boldsymbol b-\boldsymbol c\|_{\boldsymbol Q}^{2}
			+
			\|\boldsymbol a-2\boldsymbol b+\boldsymbol c\|_{\boldsymbol Q}^{2}.
		\end{aligned}
	\end{equation}
\end{lemma}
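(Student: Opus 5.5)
The plan is to expand both sides as quadratic forms in $\boldsymbol a,\boldsymbol b,\boldsymbol c$ and compare coefficients. Symmetry of $\boldsymbol Q$ is the only property needed; positive definiteness merely makes $\|\cdot\|_{\boldsymbol Q}$ a norm. Because $\boldsymbol Q$ is symmetric, $\boldsymbol x^T\boldsymbol Q\boldsymbol y=\boldsymbol y^T\boldsymbol Q\boldsymbol x$ for all vectors. Every term can therefore be written in the six monomials $\boldsymbol a^T\boldsymbol Q\boldsymbol a$, $\boldsymbol b^T\boldsymbol Q\boldsymbol b$, $\boldsymbol c^T\boldsymbol Q\boldsymbol c$, $\boldsymbol a^T\boldsymbol Q\boldsymbol b$, $\boldsymbol a^T\boldsymbol Q\boldsymbol c$ and $\boldsymbol b^T\boldsymbol Q\boldsymbol c$.

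First expand the left-hand side:
\[
2\boldsymbol a^T\boldsymbol Q(3\boldsymbol a-4\boldsymbol b+\boldsymbol c)=6\,\boldsymbol a^T\boldsymbol Q\boldsymbol a-8\,\boldsymbol a^T\boldsymbol Q\boldsymbol b+2\,\boldsymbol a^T\boldsymbol Q\boldsymbol c.
\]
Next expand each term on the right-hand side:
\[
\|2\boldsymbol a-\boldsymbol b\|_{\boldsymbol Q}^2=4\,\boldsymbol a^T\boldsymbol Q\boldsymbol a-4\,\boldsymbol a^T\boldsymbol Q\boldsymbol b+\boldsymbol b^T\boldsymbol Q\boldsymbol b,
\qquad
\|2\boldsymbol b-\boldsymbol c\|_{\boldsymbol Q}^2=4\,\boldsymbol b^T\boldsymbol Q\boldsymbol b-4\,\boldsymbol b^T\boldsymbol Q\boldsymbol c+\boldsymbol c^T\boldsymbol Q\boldsymbol c,
\]
\[
\|\boldsymbol a-2\boldsymbol b+\boldsymbol c\|_{\boldsymbol Q}^2=\boldsymbol a^T\boldsymbol Q\boldsymbol a+4\,\boldsymbol b^T\boldsymbol Q\boldsymbol b+\boldsymbol c^T\boldsymbol Q\boldsymbol c-4\,\boldsymbol a^T\boldsymbol Q\boldsymbol b+2\,\boldsymbol a^T\boldsymbol Q\boldsymbol c-4\,\boldsymbol b^T\boldsymbol Q\boldsymbol c.
\]

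Now add these with the signs prescribed in \eqref{eq:BDF2_matrix_identity} and collect coefficients:
\begin{itemize}
\item $\boldsymbol a^T\boldsymbol Q\boldsymbol a$: $1+4+1=6$;
\item $\boldsymbol b^T\boldsymbol Q\boldsymbol b$: $-1+1-4+4=0$;
\item $\boldsymbol c^T\boldsymbol Q\boldsymbol c$: $-1+1=0$;
\item $\boldsymbol a^T\boldsymbol Q\boldsymbol b$: $-4-4=-8$;
\item $\boldsymbol a^T\boldsymbol Q\boldsymbol c$: $2$;
\item $\boldsymbol b^T\boldsymbol Q\boldsymbol c$: $+4-4=0$.
\end{itemize}
These coefficients match the left-hand side exactly, which proves the identity.

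There is no genuine obstacle. The only care needed is to merge $\boldsymbol x^T\boldsymbol Q\boldsymbol y$ and $\boldsymbol y^T\boldsymbol Q\boldsymbol x$ into a single term using symmetry, and to keep the signs of the cross terms straight; the cancellations of the $\boldsymbol b^T\boldsymbol Q\boldsymbol b$, $\boldsymbol c^T\boldsymbol Q\boldsymbol c$ and $\boldsymbol b^T\boldsymbol Q\boldsymbol c$ coefficients are where an arithmetic slip would show up.
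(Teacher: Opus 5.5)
Your proof is correct and follows the paper's own argument: expand the right-hand side as a quadratic form, use the symmetry of $\boldsymbol Q$ to merge the cross terms, and check that the result is $6\boldsymbol a^{T}\boldsymbol Q\boldsymbol a-8\boldsymbol a^{T}\boldsymbol Q\boldsymbol b+2\boldsymbol a^{T}\boldsymbol Q\boldsymbol c$. Your coefficient-by-coefficient bookkeeping, and your remark that only symmetry (not positive definiteness) is needed, spell out what the paper leaves implicit.
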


\begin{proof}
	Expanding the right-hand side of
	\eqref{eq:BDF2_matrix_identity} and using the symmetry of
	$\boldsymbol Q$ gives
	\[
	6\boldsymbol a^{T}\boldsymbol Q\boldsymbol a
	-
	8\boldsymbol a^{T}\boldsymbol Q\boldsymbol b
	+
	2\boldsymbol a^{T}\boldsymbol Q\boldsymbol c,
	\]
	which is precisely the left-hand side.
\end{proof}

For $n\geq1$, define the BDF2 modified discrete energy by
\begin{equation}
	\label{eq:BDF2_discrete_energy}
	\mathcal E_h^n
	:=
	\frac14
	\left(
	\|\boldsymbol{\xi}_{\Bu}^n\|_{\boldsymbol Q_{\Bu}^0}^{2}
	+
	\|2\boldsymbol{\xi}_{\Bu}^n
	-\boldsymbol{\xi}_{\Bu}^{n-1}\|_{\boldsymbol Q_{\Bu}^0}^{2}
	\right).
\end{equation}
By \eqref{eq:mass_matrix_invariance} and the transported velocity
basis, this energy admits the equivalent physical-space representation
\begin{equation}
	\label{eq:BDF2_discrete_energy_physical}
	\mathcal E_h^n
	=
	\frac14
	\int_{\Omega_h^n}
	\rho_h^n
	\left(
	|\Bu_h^n|^2
	+
	|2\Bu_h^n-\widetilde{\Bu}_h^{\,n-1,n}|^2
	\right)
	\,\mathrm d\boldsymbol x .
\end{equation}
Define the discrete dissipation by
\begin{equation}
	\label{eq:discrete_dissipation}
	\begin{aligned}
		\mathcal D_h^{n+1}
		:={}
		\frac{1}{\mathrm{Re}}
		\bigl(
		2\nu_h^{n+1}\boldsymbol D(\Bu_h^{n+1}),
		\boldsymbol D(\Bu_h^{n+1})
		\bigr)_{\Omega_h^{n+1}}+\\
		\beta
		\|\nabla\cdot\Bu_h^{n+1}\|_{L^2(\Omega_h^{n+1})}^{2}
		+
		\alpha
		\bigl(
		(\sigma_h^{n+1})^{-1}\BJ_h^{n+1},
		\BJ_h^{n+1}
		\bigr)_{\Omega_h^{n+1}} .
	\end{aligned}
\end{equation}

\begin{theorem}[Discrete energy stability]
	\label{thm:BDF2_energy_stability}
Let
	$(\BJ_h^{n+1},\Bu_h^{n+1},\phi_h^{n+1},p_h^{n+1})$
	be the solution of the fully discrete scheme
	\eqref{eq:fully_1st} for $n=1,\ldots,N-1$.
	Then
	\begin{equation}
		\label{eq:BDF2_energy_identity}
		\begin{aligned}
			\frac{\mathcal E_h^{n+1}-\mathcal E_h^n}{\Delta t}
			&+
			\frac{1}{4\Delta t}
			\left\|
			\boldsymbol{\xi}_{\Bu}^{n+1}
			-
			2\boldsymbol{\xi}_{\Bu}^{n}
			+
			\boldsymbol{\xi}_{\Bu}^{n-1}
			\right\|_{\boldsymbol Q_{\Bu}^0}^{2}
			+
			\mathcal D_h^{n+1}=
			\bigl(
			\rho_h^{n+1}\boldsymbol f_\Bu^{n+1},
			\Bu_h^{n+1}
			\bigr)_{\Omega_h^{n+1}} .
		\end{aligned}
	\end{equation}
	In particular, if $\boldsymbol f_u=\boldsymbol0$, then
	\begin{equation}
		\label{eq:BDF2_energy_inequality}
		\frac{\mathcal E_h^{n+1}-\mathcal E_h^n}{\Delta t}
		+
		\mathcal D_h^{n+1}
		\leq0,
	\end{equation}
	and hence
	\[
	\mathcal E_h^{n+1}\leq\mathcal E_h^n .
	\]
\end{theorem}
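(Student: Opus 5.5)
The plan is to mimic the proof of Theorem~\ref{thm:continuous_energy} at the algebraic level, using the block system \eqref{eq:fully_2st}. Two structural facts make this work: the velocity mass matrix $\boldsymbol Q_{\Bu}^0$ does not depend on the time level, by \eqref{eq:mass_matrix_invariance}, and the time-derivative term reduces to a BDF2 combination of coefficient vectors, since the transported velocities $\widetilde{\Bu}_h^{\,n+1,n}$ and $\widetilde{\Bu}_h^{\,n+1,n-1}$ share the basis $\{\boldsymbol v_j^{n+1}\}$ with $\Bu_h^{n+1}$.

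\textbf{Step 1 (Ohm's law equation).} Multiply \eqref{eq:fully_2st:a} on the left by $(\boldsymbol\xi_{\BJ}^{n+1})^T$. By \eqref{eq:fully_2st:c}, $G\boldsymbol\xi_{\BJ}^{n+1}=\boldsymbol0$, so the potential term $(\boldsymbol\xi_{\BJ}^{n+1})^TG^T\boldsymbol\xi_\phi^{n+1}$ vanishes. This gives
\[
\bigl((\sigma_h^{n+1})^{-1}\BJ_h^{n+1},\BJ_h^{n+1}\bigr)_{\Omega_h^{n+1}}+(\boldsymbol\xi_{\Bu}^{n+1})^T\boldsymbol K^{n+1}\boldsymbol\xi_{\BJ}^{n+1}=0 .
\]
The sign of the second term should be checked against the variational form: by the triple-product identity, $-(\Bu_h\times\boldsymbol B,\BJ_h)=(\BJ_h\times\boldsymbol B,\Bu_h)$, which equals $(\boldsymbol\xi_{\Bu}^{n+1})^T\boldsymbol K^{n+1}\boldsymbol\xi_{\BJ}^{n+1}$.

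\textbf{Step 2 (momentum equation).} Multiply \eqref{eq:fully_2st:b} on the left by $(\boldsymbol\xi_{\Bu}^{n+1})^T$. The pressure term is removed by \eqref{eq:fully_2st:d}. The term $(\boldsymbol\xi_{\Bu}^{n+1})^T\boldsymbol L_{\Bu}^{n+1}\boldsymbol\xi_{\Bu}^{n+1}$ equals the viscous part of $\mathcal D_h^{n+1}$ together with the grad-div part $\beta\|\nabla\cdot\Bu_h^{n+1}\|^2$. The right-hand side equals $(\rho_h^{n+1}\boldsymbol f_u^{n+1},\Bu_h^{n+1})$. The coupling term $-\alpha(\boldsymbol\xi_{\Bu}^{n+1})^T\boldsymbol K^{n+1}\boldsymbol\xi_{\BJ}^{n+1}$ is replaced, using Step 1, by $\alpha((\sigma_h^{n+1})^{-1}\BJ_h^{n+1},\BJ_h^{n+1})$. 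This is the discrete analogue of the cancellation in the continuous proof, and it completes $\mathcal D_h^{n+1}$.

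\textbf{Step 3 (time-derivative term).} Apply Lemma~\ref{lem:BDF2_algebraic_identity} with $\boldsymbol Q=\boldsymbol Q_{\Bu}^0$, $\boldsymbol a=\boldsymbol\xi_{\Bu}^{n+1}$, $\boldsymbol b=\boldsymbol\xi_{\Bu}^n$ and $\boldsymbol c=\boldsymbol\xi_{\Bu}^{n-1}$. Dividing the identity by $4\Delta t$ turns
\[
(\boldsymbol\xi_{\Bu}^{n+1})^T\boldsymbol Q_{\Bu}^0(3\boldsymbol a-4\boldsymbol b+\boldsymbol c)/(2\Delta t)
\]
into
\[
(\mathcal E_h^{n+1}-\mathcal E_h^n)/\Delta t+\tfrac{1}{4\Delta t}\|\boldsymbol a-2\boldsymbol b+\boldsymbol c\|^2_{\boldsymbol Q_{\Bu}^0},
\]
with $\mathcal E_h^n$ as in \eqref{eq:BDF2_discrete_energy}. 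Combining Steps 1–3 gives \eqref{eq:BDF2_energy_identity}.

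\textbf{Step 4 (dissipative case).} If $\boldsymbol f_u=\boldsymbol0$, the right-hand side of \eqref{eq:BDF2_energy_identity} vanishes. The term $\frac{1}{4\Delta t}\|\cdot\|^2_{\boldsymbol Q_{\Bu}^0}$ is nonnegative because $\boldsymbol Q_{\Bu}^0$ is SPD, which holds since $\rho_h^0>0$. Dropping it gives \eqref{eq:BDF2_energy_inequality}. The dissipation $\mathcal D_h^{n+1}\ge0$ because $\nu_h^{n+1},\sigma_h^{n+1}>0$, $\beta>0$ and $\alpha\ge0$, so $\mathcal E_h^{n+1}\le\mathcal E_h^n$.

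The only genuinely delicate point is Step 3. We must justify that the BDF2 material derivative, tested against $\rho_h^{n+1}\Bu_h^{n+1}$ on $\Omega_h^{n+1}$, produces exactly the coefficient form with the time-independent matrix $\boldsymbol Q_{\Bu}^0$. This uses \eqref{eq:transported_velocity_basis} to express $\widetilde{\Bu}_h^{\,n+1,n}$ and $\widetilde{\Bu}_h^{\,n+1,n-1}$ in the basis $\{\boldsymbol v_j^{n+1}\}$ with coefficients $\boldsymbol\xi_{\Bu}^n$ and $\boldsymbol\xi_{\Bu}^{n-1}$. It also uses \eqref{eq:mass_matrix_invariance}, which relies on the strong mass-conservation density update: the factor $\mathcal J_{\BM_h^{n+1}}$ from the change of variables cancels against $\rho_h^{n+1}\circ\BM_h^{n+1}=\rho_h^0/\mathcal J_{\BM_h^{n+1}}$. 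Once the equivalence with \eqref{eq:fully_2st} is accepted, the remaining argument is a routine algebraic combination.
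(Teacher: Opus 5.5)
Your proposal is correct and follows essentially the same route as the paper. The paper tests the four block equations with $\alpha(\boldsymbol{\xi}_{\BJ}^{n+1})^T$, $(\boldsymbol{\xi}_{\Bu}^{n+1})^T$, $-\alpha(\boldsymbol{\xi}_{\phi}^{n+1})^T$ and $-(\boldsymbol{\xi}_{p}^{n+1})^T$ and sums them so that the $\boldsymbol K$-, $G$- and $B$-blocks cancel; this is a repackaging of your elimination via \eqref{eq:fully_2st:c}, \eqref{eq:fully_2st:d} and substitution of the tested Ohm's law, followed by the same application of Lemma~\ref{lem:BDF2_algebraic_identity} with $\boldsymbol Q=\boldsymbol Q_{\Bu}^0$, and your explicit checks that $\boldsymbol Q_{\Bu}^0$ is SPD and $\mathcal D_h^{n+1}\ge 0$ supply details the paper leaves implicit in its final ``hence''.
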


\begin{proof}
	Left-multiply
	\eqref{eq:fully_2st:a}--\eqref{eq:fully_2st:d}
	by
	$\alpha(\boldsymbol{\xi}_{\BJ}^{n+1})^T$,
	$(\boldsymbol{\xi}_{\Bu}^{n+1})^T$,
	$-\alpha(\boldsymbol{\xi}_{\phi}^{n+1})^T$, and
	$-(\boldsymbol{\xi}_{p}^{n+1})^T$, respectively, and sum the
	resulting identities.
	Since
	\[
	(\boldsymbol{\xi}_{\BJ}^{n+1})^T
	(\boldsymbol K^{n+1})^T
	\boldsymbol{\xi}_{\Bu}^{n+1}
	=
	(\boldsymbol{\xi}_{\Bu}^{n+1})^T
	\boldsymbol K^{n+1}
	\boldsymbol{\xi}_{\BJ}^{n+1},
	\]
	and similarly for the $G$- and $B^{n+1}$-blocks, the coupling and
	constraint terms cancel. We obtain
	\begin{equation}
		\label{eq:combined_energy_BDF2}
		\begin{aligned}
			&
			(\boldsymbol{\xi}_{\Bu}^{n+1})^T
			\boldsymbol Q_{\Bu}^{0}
			\frac{
				3\boldsymbol{\xi}_{\Bu}^{n+1}
				-
				4\boldsymbol{\xi}_{\Bu}^{n}
				+
				\boldsymbol{\xi}_{\Bu}^{n-1}}
			{2\Delta t}
			+
			(\boldsymbol{\xi}_{\Bu}^{n+1})^T
			\boldsymbol L_{\Bu}^{n+1}
			\boldsymbol{\xi}_{\Bu}^{n+1}
			+\\
			\alpha
			(\boldsymbol{\xi}_{\BJ}^{n+1})^T
			\boldsymbol Q_{\BJ}^{n+1}
			\boldsymbol{\xi}_{\BJ}^{n+1}=
			(\boldsymbol{\xi}_{\Bu}^{n+1})^T
			\boldsymbol b_{\Bu}^{n+1}.
		\end{aligned}
	\end{equation}
	
	Applying Lemma~\ref{lem:BDF2_algebraic_identity} with
	\[
	\boldsymbol a=\boldsymbol{\xi}_{\Bu}^{n+1},
	\qquad
	\boldsymbol b=\boldsymbol{\xi}_{\Bu}^{n},
	\qquad
	\boldsymbol c=\boldsymbol{\xi}_{\Bu}^{n-1},
	\qquad
	\boldsymbol Q=\boldsymbol Q_{\Bu}^{0},
	\]
	gives
	\begin{equation}
		\label{eq:BDF2_time_energy}
		\begin{aligned}
			(\boldsymbol{\xi}_{\Bu}^{n+1})^T
			\boldsymbol Q_{\Bu}^{0}
			\frac{
				3\boldsymbol{\xi}_{\Bu}^{n+1}
				-
				4\boldsymbol{\xi}_{\Bu}^{n}
				+
				\boldsymbol{\xi}_{\Bu}^{n-1}}
			{2\Delta t}=
			\frac{\mathcal E_h^{n+1}-\mathcal E_h^n}{\Delta t}
			+
			\frac{1}{4\Delta t}
			\left\|
			\boldsymbol{\xi}_{\Bu}^{n+1}
			-
			2\boldsymbol{\xi}_{\Bu}^{n}
			+
			\boldsymbol{\xi}_{\Bu}^{n-1}
			\right\|_{\boldsymbol Q_{\Bu}^{0}}^{2}.
		\end{aligned}
	\end{equation}
	Furthermore, by the definitions of
	$\boldsymbol L_{\Bu}^{n+1}$,
	$\boldsymbol Q_{\BJ}^{n+1}$, and
	$\boldsymbol b_{\Bu}^{n+1}$,
	\[
	(\boldsymbol{\xi}_{\Bu}^{n+1})^T
	\boldsymbol L_{\Bu}^{n+1}
	\boldsymbol{\xi}_{\Bu}^{n+1}
	+
	\alpha
	(\boldsymbol{\xi}_{\BJ}^{n+1})^T
	\boldsymbol Q_{\BJ}^{n+1}
	\boldsymbol{\xi}_{\BJ}^{n+1}
	=
	\mathcal D_h^{n+1},
	\]
	and
	\[
	(\boldsymbol{\xi}_{\Bu}^{n+1})^T
	\boldsymbol b_{\Bu}^{n+1}
	=
	\bigl(
	\rho_h^{n+1}\boldsymbol f_u^{n+1},
	\Bu_h^{n+1}
	\bigr)_{\Omega_h^{n+1}}.
	\]
	Substituting these identities and
	\eqref{eq:BDF2_time_energy} into
	\eqref{eq:combined_energy_BDF2}
	yields \eqref{eq:BDF2_energy_identity}.
	If $\boldsymbol f_u=\boldsymbol0$, dropping the nonnegative BDF2
	remainder gives \eqref{eq:BDF2_energy_inequality}, and hence
	$\mathcal E_h^{n+1}\leq\mathcal E_h^n$.
\end{proof}

\begin{remark}
	The discrete energy identity \eqref{eq:BDF2_energy_identity}, and hence
	the corresponding energy-stability result, remains valid when the
	homogeneous Dirichlet condition on the velocity is replaced by the
	homogeneous traction condition
	\[
	\left(
	-p\bbI
	+\frac{2\nu}{\mathrm{Re}}\boldsymbol{D}(\Bu)
	\right)\Bn
	=\boldsymbol{0}
	\qquad \text{on } \partial\Omega(t).
	\]
	Indeed, for any $\Bv\in\boldsymbol{H}^1(\Omega(t))$, integration by
	parts gives
	\[
	\begin{aligned}
		\left(-\frac{1}{\mathrm{Re}}
		\nabla\cdot\bigl(2\nu\boldsymbol{D}(\Bu)\bigr)
		+\nabla p,\Bv
		\right)_{\Omega(t)}=\\
		\frac{1}{\mathrm{Re}}
		\bigl(
		2\nu\boldsymbol{D}(\Bu),
		\boldsymbol{D}(\Bv)
		\bigr)_{\Omega(t)}
		-
		\bigl(
		p,\nabla\cdot\Bv
		\bigr)_{\Omega(t)}
		-
		\left\langle
		\left(
		-p\bbI
		+\frac{2\nu}{\mathrm{Re}}\boldsymbol{D}(\Bu)
		\right)\Bn,
		\Bv
		\right\rangle_{\partial\Omega(t)} .
	\end{aligned}
	\]
	Hence, the boundary term vanishes under the homogeneous traction
	condition, while the remaining terms have the same form as those in
	the preceding energy argument.
\end{remark}

\section{Numerical experiments}\label{sec:experiment}
In this section, we provide numerical examples to verify the desired properties of our proposed scheme.
In Example~\ref{ex:accuracy}, we verified the computational accuracy and charge conservation property
of the Lagrangian finite element algorithm.
Example~\ref{ex:energy} verified the energy stability of the fully discrete scheme \eqref{eq:fully_1st}.
Then Example~\ref{ex:application} simulates a magnetic Rayleigh-Taylor instability.
The finite element code is developed based on the PHG library \cite{zha22}.
The 3D figures are generated by the open-source software ParaView.

\subsection{Spatial convergence and charge conservation}\label{ex:accuracy}
We consider the following smooth manufactured solution:
\[\left\{
\begin{aligned}
	\rho &= 1,\\
	\BB &= (0,1,0)^\top,\\
	\BJ &= \bigl(\sin(y+z),\,\cos(\pi z),\,\sin(x+y)\bigr)^\top,\\
	\phi &= \sin(z),\\
	\Bu &= \bigl(\sin(\pi x)\cos(\pi y),\,
	-\cos(\pi x)\sin(\pi y),\,0\bigr)^\top,\\
	p &= \frac14\bigl(\cos(2\pi x)+\cos(2\pi y)\bigr).
\end{aligned}
\right.\]
For this test, the source terms, initial data
and boundary data are prescribed consistently with the above solution.
The initial computational domain is unit cube $\Omega_0=[0,1]^3$. Since exact velocity satisfies
$\Bu\cdot\Bn=0$ on $\partial\Omega_0$, the boundary has zero normal
velocity and the computational domain remains fixed. Moreover we set
\[T=0.1,\quad \Delta t=10^{-3},\quad \mathrm{Re}=\alpha=\sigma=\nu=1,\quad \beta=0.25.\]
The coupled linear systems in \eqref{eq:fully_2st} are solved by
preconditioned FGMRES with a relative tolerance of $10^{-8}$.

We firstly take $k=1$. Quadratic Lagrange elements are used for the
discrete flow map, the isoparametric $\BP_2$--$P_1$ Taylor--Hood pair
for the velocity and pressure, the first order parametric BDM element \cite{fg16,LI2025}
for the current density, and discontinuous $P_0$ elements for the
electric potential.
The expected spatial convergence rates are
\begin{equation}
	\begin{aligned}
		\left\|\Bu-\Bu_h^N\right\|_{1,\Omega_h^N}
		&=\mathcal{O}(h^2),\quad
		\left\|p-p_h^N\right\|_{0,\Omega_h^N}
		&=\mathcal{O}(h^2),\quad
		\left\|\BJ-\BJ_h^N\right\|_{\operatorname{div},\Omega_h^N}
		&=\mathcal{O}(h^2).
	\end{aligned}
	\label{eq:expected_rate_k1}
\end{equation}

The errors and observed convergence rates reported in
Table~\ref{tab:conv_k1}. The numerical results  are consistent with the expected second-order convergence as the mesh is refined.
 Moreover,
$\|\nabla\cdot\BJ_h^N\|_{0,\Omega_h^N}$ is very small, in agreement with the discrete charge
conservation established in Lemma~\ref{lem:fully_charge}.
Although the computational domain remains fixed, the computational mesh
deforms with the flow. Figure~\ref{fig:curve_mesh2s} shows the
deformed curvilinear mesh and the computed velocity field at
$T=0.1$ for $h=0.2165$.
\begin{table}[htb]
	\centering
	\fontsize{9pt}{12pt}\selectfont
	\caption{Errors and observed convergence rates for $k=1$
		(Example~\ref{ex:accuracy}).}
	\vspace{0.1in}
	\label{tab:conv_k1}
	\begin{tabular}{@{\hspace{0.5em}}c@{\hspace{0.5em}}
			c@{\hspace{0.5em}}
			c@{\hspace{0.5em}}
			c@{\hspace{0.5em}}
			c@{\hspace{0.5em}}
			c@{\hspace{0.5em}}
			c@{\hspace{0.5em}}
			c@{\hspace{0.5em}}}
		\hline
		$h$ &
		$\left\|\Bu-\Bu_h^N\right\|_{1,\Omega_h^N}$ &
		rate &
		$\left\|p-p_h^N\right\|_{0,\Omega_h^N}$ &
		rate &
		$\left\|\BJ-\BJ_h^N\right\|_{\operatorname{div},\Omega_h^N}$ &
		rate &
		$\left\|\nabla\cdot\BJ_h^N\right\|_{0,\Omega_h^N}$
		\\
		\hline
		0.8660 & 4.5070e-01 & -      & 8.3942e-01 & -      & 5.8092e-02 & -      & 6.3658e-11 \\
		0.4330 & 1.8913e-01 & 1.2528 & 1.1433e-01 & 2.8762 & 1.4562e-02 & 1.9961 & 6.9094e-12 \\
		0.2165 & 4.9288e-02 & 1.9401 & 3.3089e-02 & 1.7888 & 3.6211e-03 & 2.0077 & 1.1645e-11 \\
		0.1083 & 1.2320e-02 & 2.0002 & 6.7984e-03 & 2.2831 & 9.0384e-04 & 2.0023 & 3.3687e-11 \\
		\hline
	\end{tabular}
\end{table}

\begin{figure}[!htbp]
	\centering
	\includegraphics[width=10cm]{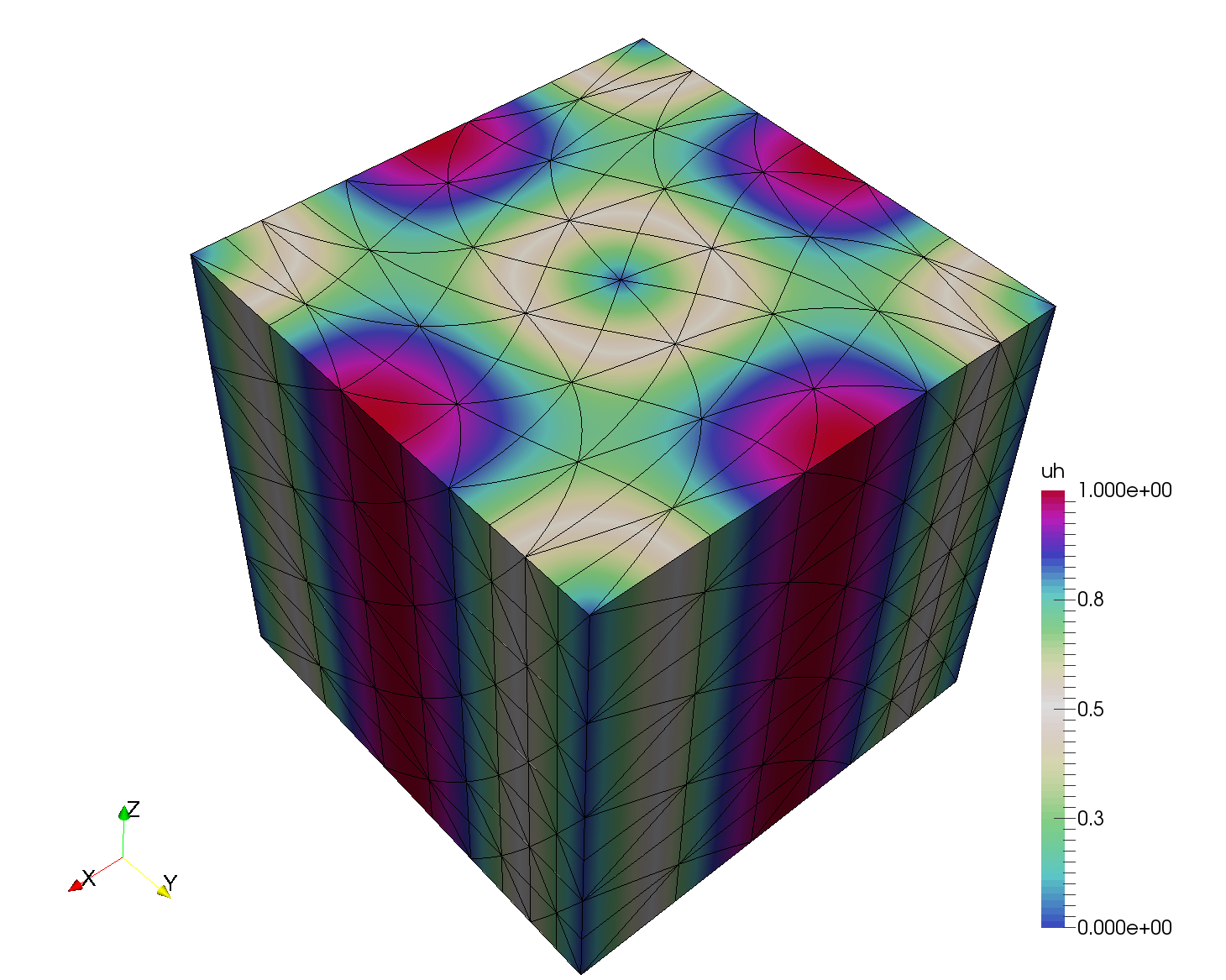}
	\caption{Deformed curvilinear mesh and computed velocity field at
			$T=0.1$ for $h=0.2165$ 	(Example~\ref{ex:accuracy}).}
	\label{fig:curve_mesh2s}
\end{figure}

We next take $k=2$ to assess the higher-order spatial accuracy of the
method. Cubic Lagrange elements are used for the discrete flow map,
the isoparametric $\BP_3$--$P_2$ Taylor--Hood pair for the velocity and
pressure, the second order parametric BDM element \cite{fg16,LI2025} for the current
density, and discontinuous $P_1$ elements for the electric potential.
The expected spatial convergence rates are
\begin{equation}
	\begin{aligned}
		\left\|\Bu-\Bu_h^N\right\|_{1,\Omega_h^N}
		&=\mathcal{O}(h^3),\quad
		\left\|p-p_h^N\right\|_{0,\Omega_h^N}
		&=\mathcal{O}(h^3),\quad
		\left\|\BJ-\BJ_h^N\right\|_{\operatorname{div},\Omega_h^N}
		&=\mathcal{O}(h^3).
	\end{aligned}
	\label{eq:expected_rate_k2}
\end{equation}

\begin{table}[htb]
	\centering
	\fontsize{9pt}{12pt}\selectfont
	\caption{Errors and observed convergence rates for $k=2$
		(Example~\ref{ex:accuracy}).}
	\vspace{0.1in}
	\label{tab:conv_k2}
	\begin{tabular}{@{\hspace{0.5em}}c@{\hspace{0.5em}}
			c@{\hspace{0.5em}}
			c@{\hspace{0.5em}}
			c@{\hspace{0.5em}}
			c@{\hspace{0.5em}}
			c@{\hspace{0.5em}}
			c@{\hspace{0.5em}}
			c@{\hspace{0.5em}}}
		\hline
		$h$ &
		$\left\|\Bu-\Bu_h^N\right\|_{1,\Omega_h^N}$ &
		rate &
		$\left\|p-p_h^N\right\|_{0,\Omega_h^N}$ &
		rate &
		$\left\|\BJ-\BJ_h^N\right\|_{\operatorname{div},\Omega_h^N}$ &
		rate &
		$\left\|\nabla\cdot\BJ_h^N\right\|_{0,\Omega_h^N}$
		\\
		\hline
		0.8660 & 2.4908e-01 & -      & 5.5357e-01 & -      & 1.0338e-02 & -      & 6.2713e-10 \\
		0.4330 & 2.1975e-02 & 3.5027 & 2.0328e-02 & 4.7672 & 1.3861e-03 & 2.8989 & 7.6111e-11 \\
		0.2165 & 2.5923e-03 & 3.0836 & 1.3703e-03 & 3.8909 & 1.7414e-04 & 2.9927 & 2.0447e-11 \\
		0.1083 & 3.1705e-04 & 3.0315 & 1.3791e-04 & 3.3127 & 2.1843e-05 & 2.9950 & 4.9237e-11 \\
		\hline
	\end{tabular}
\end{table}
The numerical results reported in Table~\ref{tab:conv_k2} show that the observed
convergence rates approach the expected third order as the mesh is
refined, while
$\|\nabla\cdot\BJ_h^N\|_{0,\Omega_h^N}$ remains very small, again in agreement with the discrete charge
conservation in Lemma~\ref{lem:fully_charge}. Overall, these computations verify the expected high-order accuracy and discrete charge conservation for our scheme.

\subsection{Energy stability verification}\label{ex:energy}
To verify the discrete energy-dissipation inequality, we consider a
variable-density problem with discontinuous electrical conductivity on
the initial domain $\Omega_0=[0,1]^3$. The initial density
and velocity are prescribed by
\[
\rho_0=1+0.3\sin(\pi z),\qquad
\Bu_0=
\bigl(
\sin(\pi x)\cos(\pi y),\,
-\cos(\pi x)\sin(\pi y),\,0
\bigr)^\top .
\]
The body force $\Bf_u$ is set to be zero and
the applied magnetic field is $\BB=(0,0,1)^\top$. For the velocity,
we impose the homogeneous traction condition $\left(-p\boldsymbol I+\frac{2\nu}{\mathrm{Re}}\BD(\Bu)
\right)\Bn=\boldsymbol 0$ $\text{on }\partial\Omega(t)$,
together with boundary condition $\phi=0$ for electric potential.
As discussed in Remark~3, the discrete energy estimate remains valid.
The dimensionless parameters are chosen as
\[
\mathrm{Re}=1000,\qquad
\alpha=\nu_0=1,\qquad
\beta=0.25.
\]
The electrical conductivity is prescribed in the reference
configuration by
\begin{equation}
	\sigma_0(x,y,z)=
	\begin{cases}
		10^{-3}, & 0.5\le z\le1,\\
		1,       & 0\le z<0.5,
	\end{cases}
	\qquad (x,y,z)\in\Omega_0 .
	\label{eq:conductivity_energy}
\end{equation}
Its discrete approximation is transported with the Lagrangian flow
according to the material-transport relation defined in \eqref{eq:fully_discrete_material}.
For the numerical discretization, we take $k=1$,
$h=0.2165$, $\Delta t=0.01$ and $T=1$. The coupled linear systems in
\eqref{eq:fully_2st} are solved by FGMRES with a relative tolerance of $10^{-8}$.
For this example, the mesh has 3,072 elements.
The velocity has 14,739 degrees of freedoms (DOFs) and $\BJ_h$ has 19,584 DOFs.
For $n\geq1$, define the discrete energy increment rate together with the dissipation terms by
\begin{equation}
	\begin{aligned}
		\left(\frac{dE}{dt}\right)^{n+1}
		&:=\frac{\mathcal{E}_h^{n+1}-\mathcal{E}_h^n}{\Delta t},\\
		\mathrm{DE}^{n+1}		&:=	\frac{1}{\mathrm{Re}}	\left(2\nu_h^{n+1}\BD(\Bu_h^{n+1}),
		\BD(\Bu_h^{n+1})\right)_{\Omega_h^{n+1}}
		+\beta\left\|
		\nabla\cdot\Bu_h^{n+1}
		\right\|_{0,\Omega_h^{n+1}}^2,\\
		\mathrm{Ohm}^{n+1}	&:=	\alpha\left((\sigma_h^{n+1})^{-1}\BJ_h^{n+1},\BJ_h^{n+1}\right)_{\Omega_h^{n+1}} .
	\end{aligned}\label{eq:energy_dissipation_terms}
\end{equation}
where $\mathcal{E}_h^n$ denotes the modified BDF2 energy defined in \eqref{eq:BDF2_discrete_energy_physical}.
Since $\boldsymbol f_u=\boldsymbol 0$,
Theorem~\ref{thm:BDF2_energy_stability} together with Remark~3, yields
\begin{equation}
	\left(\frac{dE}{dt}\right)^{n+1}+\mathrm{DE}^{n+1}+ \mathrm{Ohm}^{n+1}	\leq 0,	\quad n\geq1.
	\label{eq:energy_dissipation_test}
\end{equation}
Figure~\ref{fig:energy_trend} shows
$\left(\frac{dE}{dt}\right)^{n+1}$,$\mathrm{DE}^{n+1}$ and $\mathrm{Ohm}^{n+1}$ in the left panel and their sum in the
right panel. The sum remains non-positive at all time levels,
in agreement with the energy dissipation inequality \eqref{eq:energy_dissipation_test}.
\begin{figure}[!htbp]
	\centering
	\includegraphics[width=7cm]{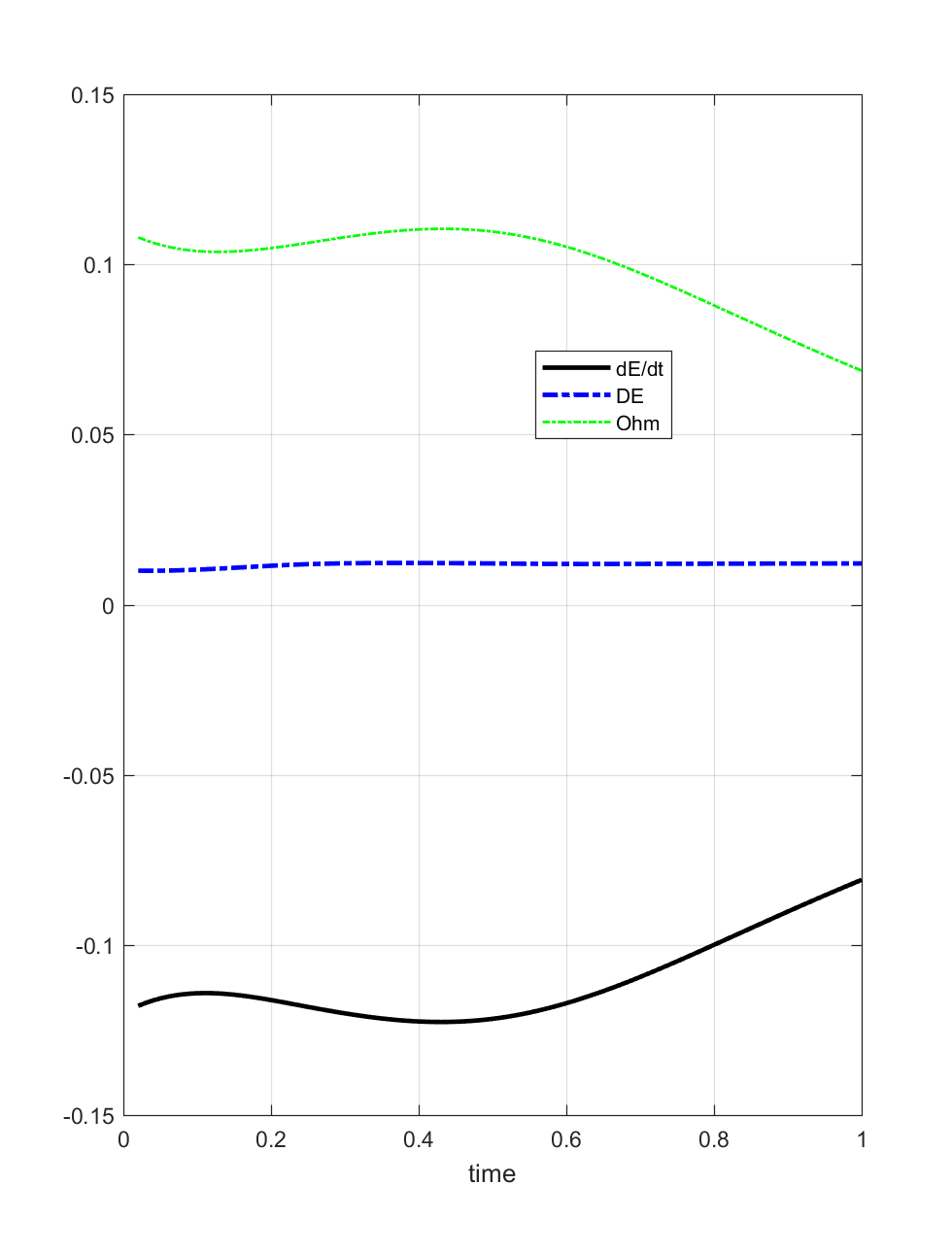}
	\includegraphics[width=7cm]{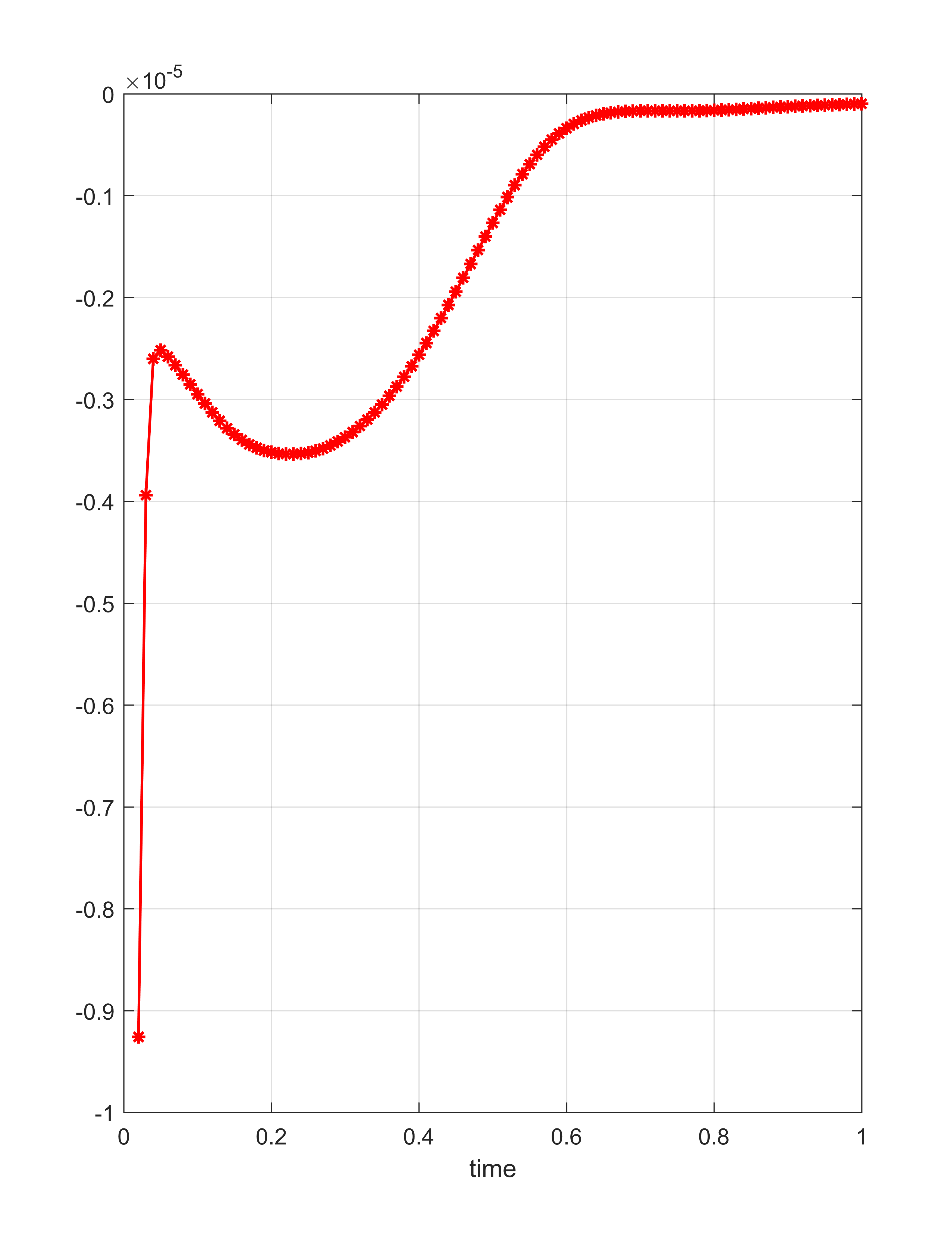}
	\caption{Discrete energy-dissipation terms (left) and their sum
		(right) in Example~\ref{ex:energy}.}
	\label{fig:energy_trend}
\end{figure}

Finally, Figure~\ref{fig:ex2fielduJ} shows the deformed 3D tetrahedral mesh, the
computed velocity field  and current density at $T=1$.
The low-conductivity material, initially occupying the upper
half of $\Omega_0$, is transported with the Lagrangian flow. The
computed current density distribution exhibits small values in the
corresponding region, qualitatively consistent with the prescribed conductivity.
\begin{figure}[!htbp]
	\centering
	\includegraphics[width=7cm]{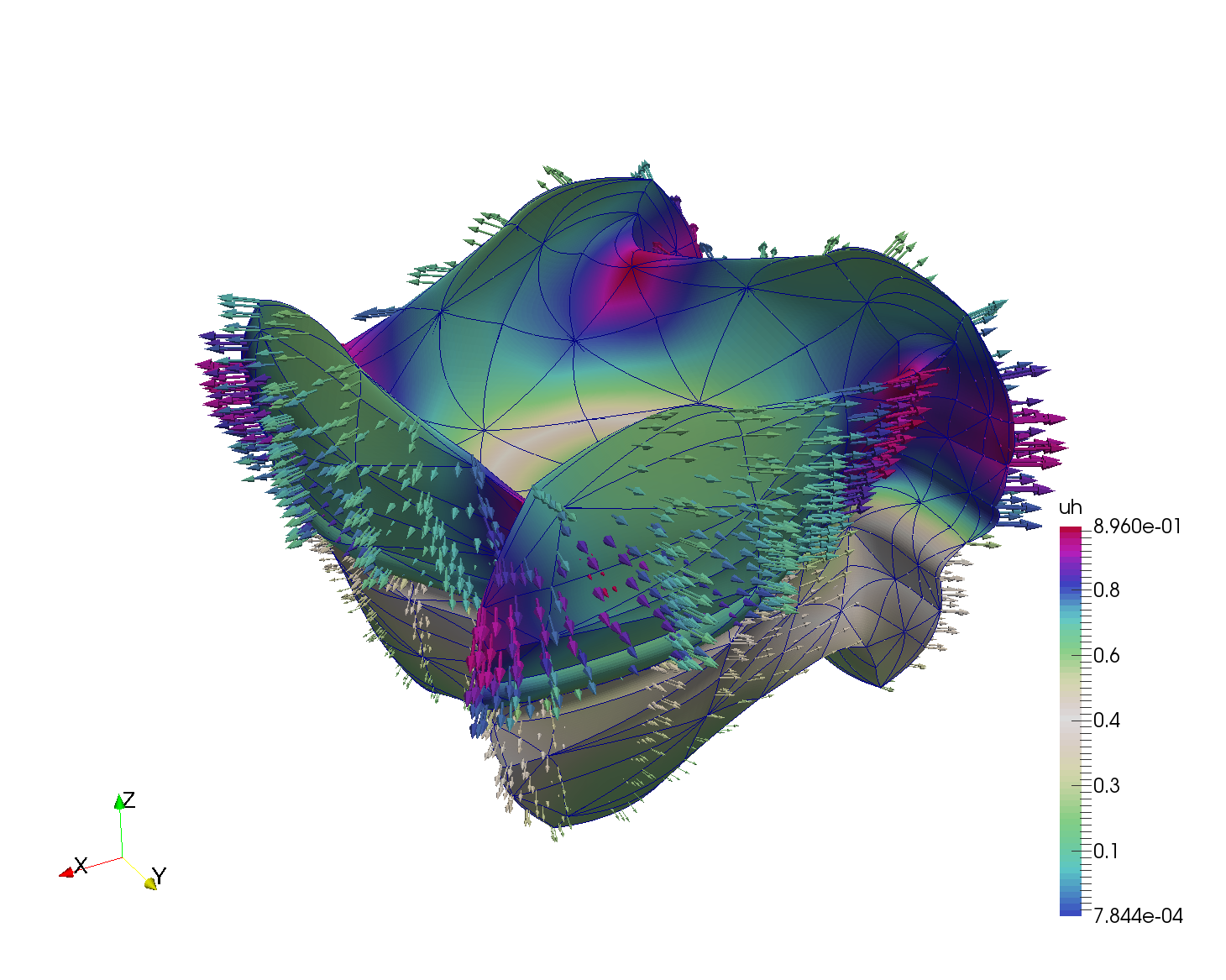}
    \includegraphics[width=7cm]{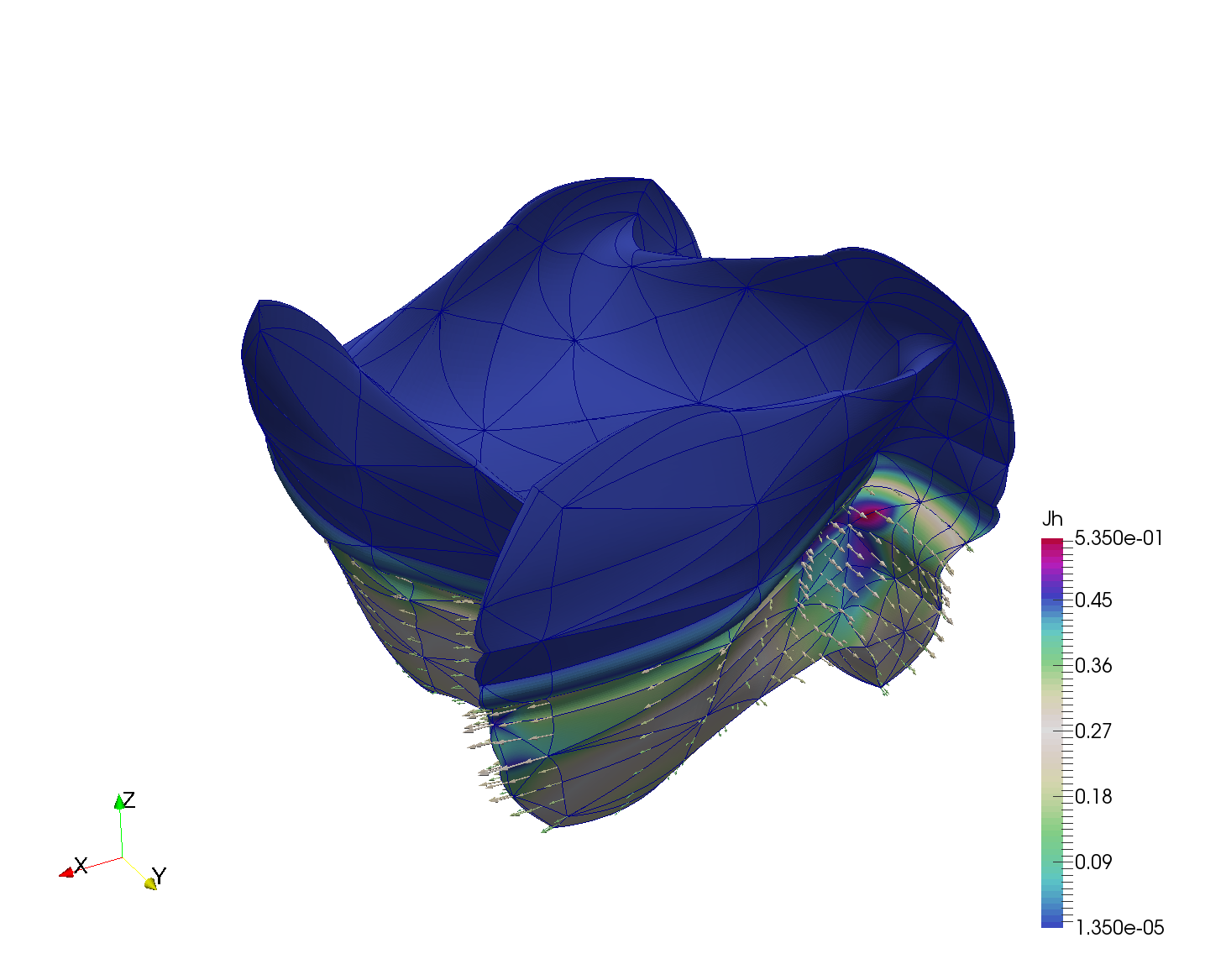}
	\caption{Deformed curved tetrahedral mesh at $T=1$,
    the velocity field (left) and current density (right) (Example~\ref{ex:energy}).}
	\label{fig:ex2fielduJ}
\end{figure}

\subsection{3D Rayleigh-Taylor instability under a constant magnetic field}\label{ex:application}
To further validate the applicability of the proposed algorithm to incompressible
inductionless MHD with moving interface. Similar to the case in \cite{dob12} (Section 8.2 therein), we simulate a 3D
magnetic Rayleigh-Taylor instability under a constant magnetic field.
The characteristic quantities used in this simulation are summarized in
Table~\ref{tab:characteristic_scales}.
\begin{table}[!htbp]
\centering
\caption{Characteristic quantities used for nondimensionalization (Example~\ref{ex:application}).}
\label{tab:characteristic_scales}
\begin{tabular}{lccc}
\hline
Characteristic quantity & Symbol & Value & Unit \\
\hline
Density
& $\rho_c$ & $1000$ & $\mathrm{kg/m^3}$ \\
Dynamic viscosity
& $\nu_c$ & $10^{-3}$ & $\mathrm{Kg/m\cdot s}$ \\
Electrical conductivity
& $\sigma_c$ & $10^{3}$ & $\mathrm{S/m}$ \\
Length
& $L_c$ & $0.1$ & $\mathrm{m}$ \\
Velocity
& $u_c$ & $1$ & $\mathrm{m/s}$ \\
Magnetic field
& $B_c$ & $1$ & $\mathrm{T}$ \\
\hline
\end{tabular}
\end{table}
Under these characteristic scales, the Reynolds and coupling numbers in this test are
\[\mathrm{Re}  =  \frac{\rho_cu_cL_c}{\nu_c}  = 10^5,\quad \alpha	= 	\frac{\sigma_cB_c^{2}L_c}{\rho_cu_c} = 0.1.\]
The applied magnetic field is $\BB = (0,1,0)^\top$, gravity $\boldsymbol{f}_u=(0,0,-0.98)^\top$
and initial domain is
\[\Omega_{0} =  [0,1] \times[0,1]\times[-1,1],\]
$\Gamma_0$ is set by $z=0$, which is the initial material interface. Free-slip boundary condition for $\Bu$ and electrical insulating boundary condition for $\BJ$ are imposed
 \[\Bu\cdot\Bn = 0, \quad  \BJ\cdot\Bn = 0  \quad \text{on } \partial\Omega.\]
Similar to the setting in \cite{dob12}, we consider a single material with initial smooth density gradient along
the $z$-direction by
\begin{equation}
\rho_0 = \frac{\rho_1+\rho_2}{2} + \frac{\rho_2-\rho_1}{\pi}\arctan(\omega z),
\end{equation}
where $\rho_1 = 0.5$, which represents the light fluid below $z=0$, $\rho_2=1.0$, which represents the heavy fluid above $z=0$. Smoothing parameter $\omega=20$. Grad-div stabilization parameter is set by $\beta=0.25$.
Moreover we apply the following initial velocity perturbation
\begin{equation}
\Bu_0 = -u_0\left(
          \begin{array}{c}
            z\sin(2\pi x)\cos(2\pi y) \\
            z\cos(2\pi x)\sin(2\pi y) \\
            \cos(2\pi x)\cos(2\pi y) \\
          \end{array}
        \right),\quad u_0 = 0.02\exp(-2\pi z^2).
\end{equation}
The initial electrical conductivity and viscosity coefficient are prescribed as
\begin{equation}
	\sigma_0(x,y,z)=
	\begin{cases}
		1.0, & 0 \le z \le1,\\
		0.1, & -1 \le z<0,
	\end{cases},\quad \nu_0(x,y,z)=
	\begin{cases}
		1.0, & 0 \le z \le1,\\
		0.25, & -1 \le z<0,
	\end{cases}	\quad (x,y,z)\in\Omega_0.
\end{equation}
For the purpose of visualization, we set $k=1$ in \eqref{eq:initial_velocity_space}.
The mesh has 49,152 elements and $h=0.1083$.
The numbers of DOFs for $\Bu_h$ and $\BJ_h$ are 212,355 and 302,592 respectively.
Time step length $\Delta t= 5 \times 10^{-3}$ and the terminal time is $T = 3.25$. The coupled linear algebraic systems \eqref{eq:fully_2st} are solved by preconditioned FGMRES solver with relative tolerance $10^{-6}$.
At the terminal time, the divergence error of $\BJ_h$ is about $2.0276 \times 10^{-11}$.

In Figure~\ref{ex3:fig1}, we firstly show the curved tetrahedral mesh near $y=0.5$ cross section, from which we can see
the mesh are extremely deformed near the material interface. Then we show the change of material interface transported by the fluid in Figure~\ref{ex3:fig3}. From those figures, we can see the heavy fluid drops in the middle, while the light fluid penetrates into the top region in the vicinity of wall.
\begin{figure}[!htbp]
	\centering
	\includegraphics[width=8cm]{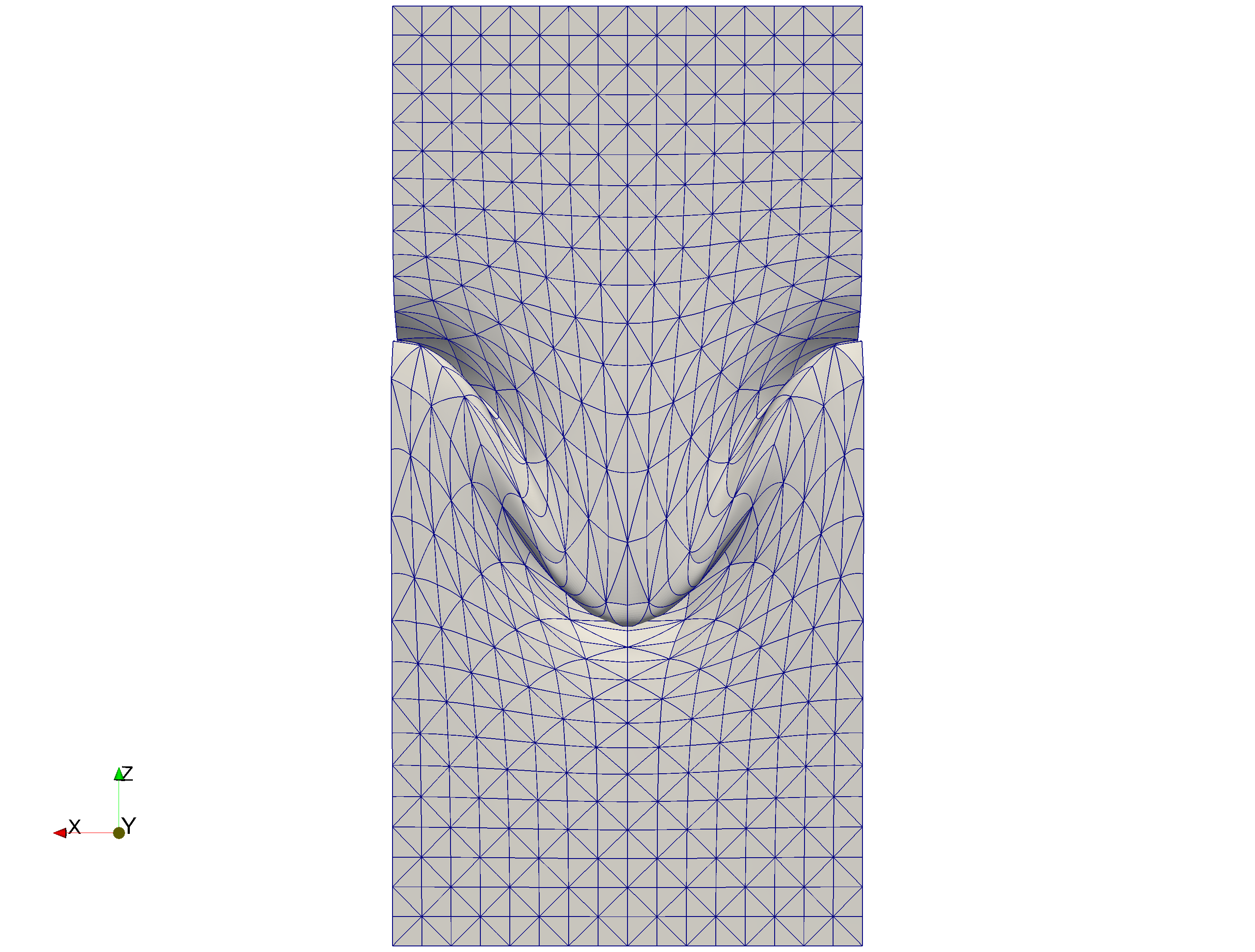}
	\includegraphics[width=6cm]{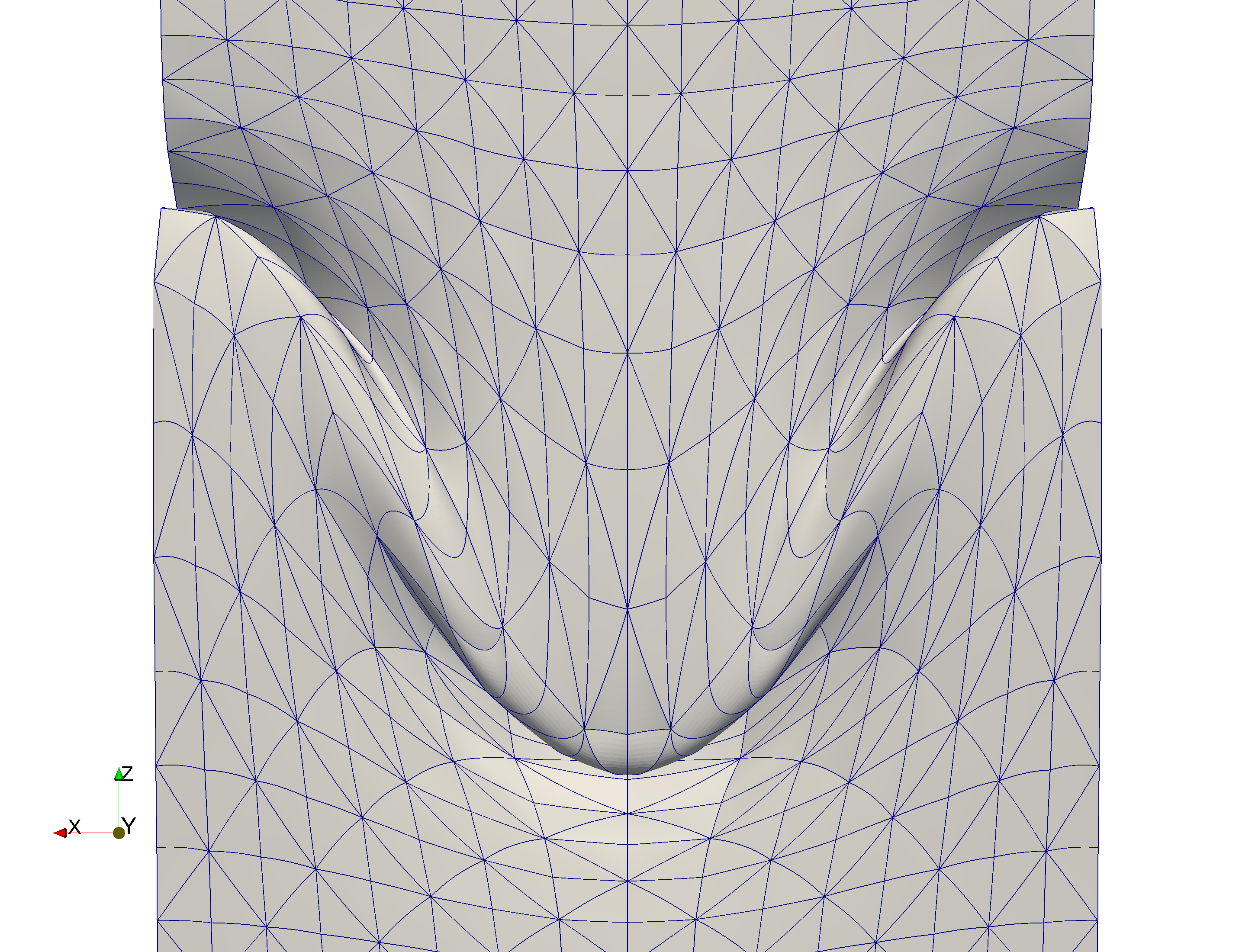}
	\caption{The curved meshes in the vicinity of $y=0.5$ cross section at time $T=3.25$.
  The right one is the zoom in of left panel (Example~\ref{ex:application}).}
  \label{ex3:fig1}
\end{figure}

\begin{figure}[!htbp]
	\centering
	\includegraphics[width=6cm]{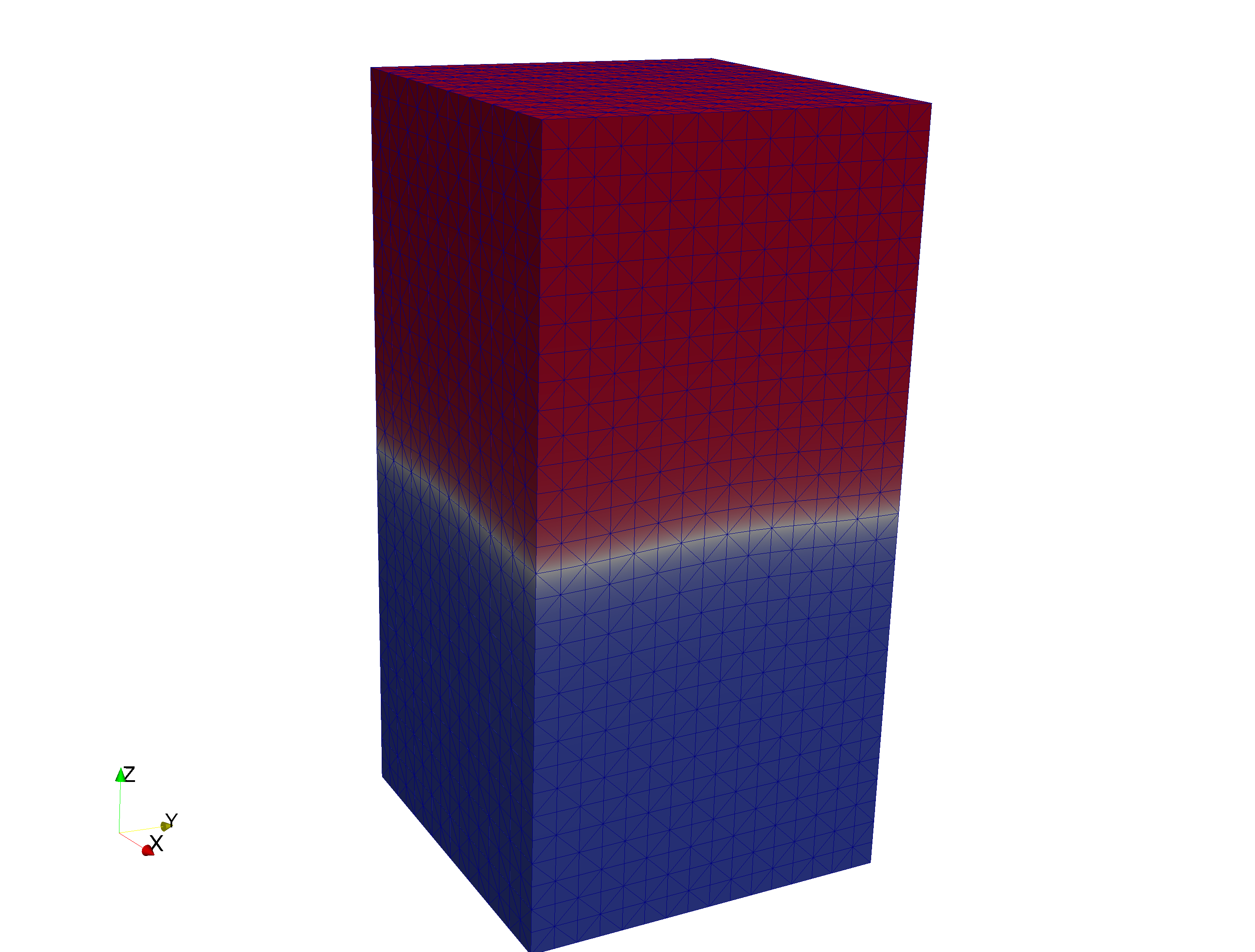}
	\includegraphics[width=6cm]{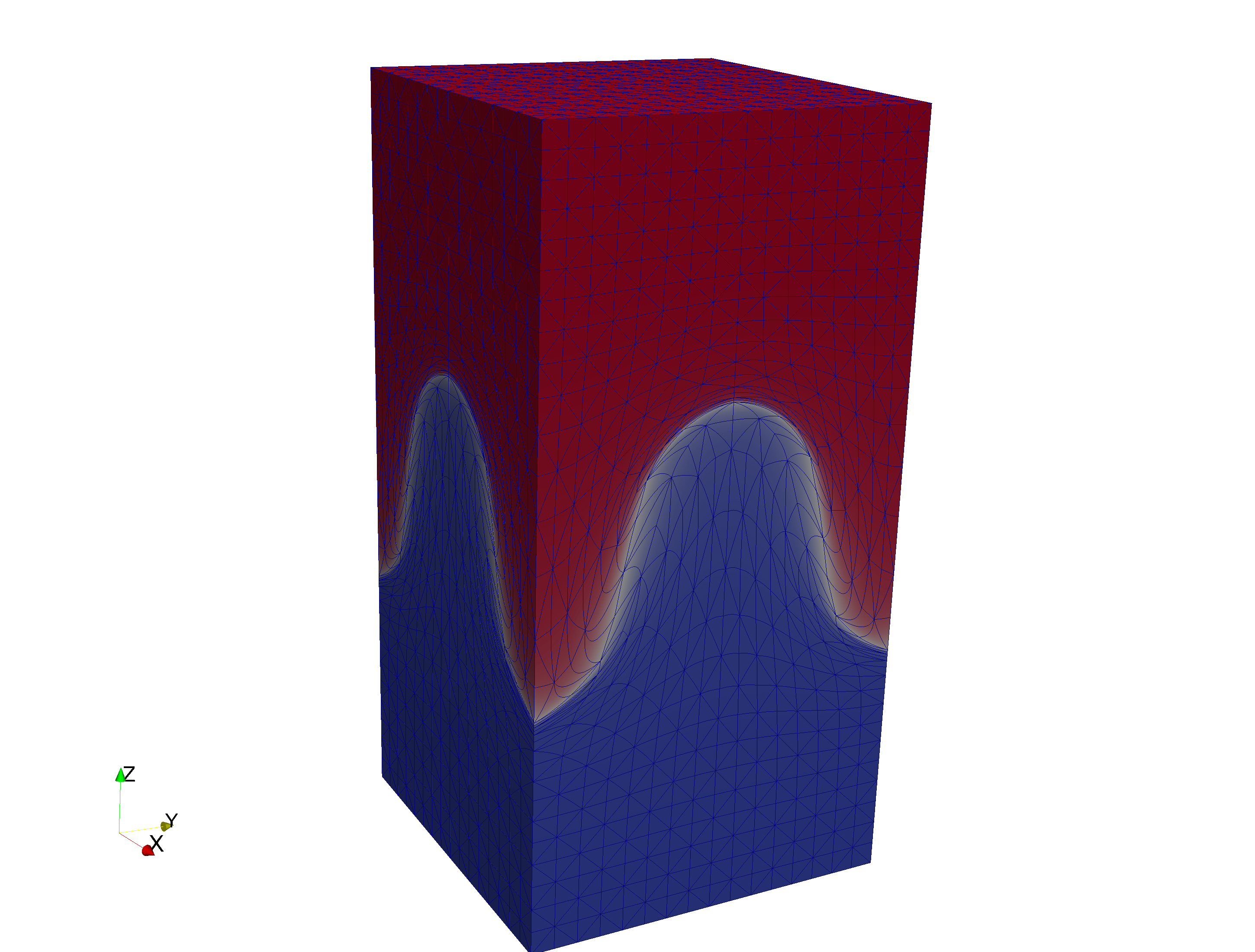}
	\caption{The material interface distribution at
     $t=0.45$ (left) and $t=3.25$ (right). The top is heavy fluid and the bottom is
     light fluid. Note that the initial interface is $z=0$ (Example~\ref{ex:application}).}
     \label{ex3:fig3}
\end{figure}

Figure~\ref{ex3:fig2} shows that the acceleration of the system under the gravity and the dissipation terms also grows with respect to time. Then Figure~\ref{ex3:fig4} and Figure~\ref{ex3:fig5} show the magnitude and vector distribution of $\Bu_h$
in the vicinity of $x=0.5$ cross section at $t=0.45$ and $t=3.25$,
from which we again observe how the heavy fluid descends and the light fluid rises.
Finally, Figure~\ref{ex3:fig6} plot the distribution of the magnitude of $\BJ_h$ in the vicinity of $x=0.5$ cross section
and Figure~\ref{ex3:fig7} gives the distribution of current density vector in the vicinity of $z=0.05$ cross section.
From them, we can see complex and symmetric current density distribution.

\begin{figure}[!htbp]
	\centering
	\includegraphics[width=6cm]{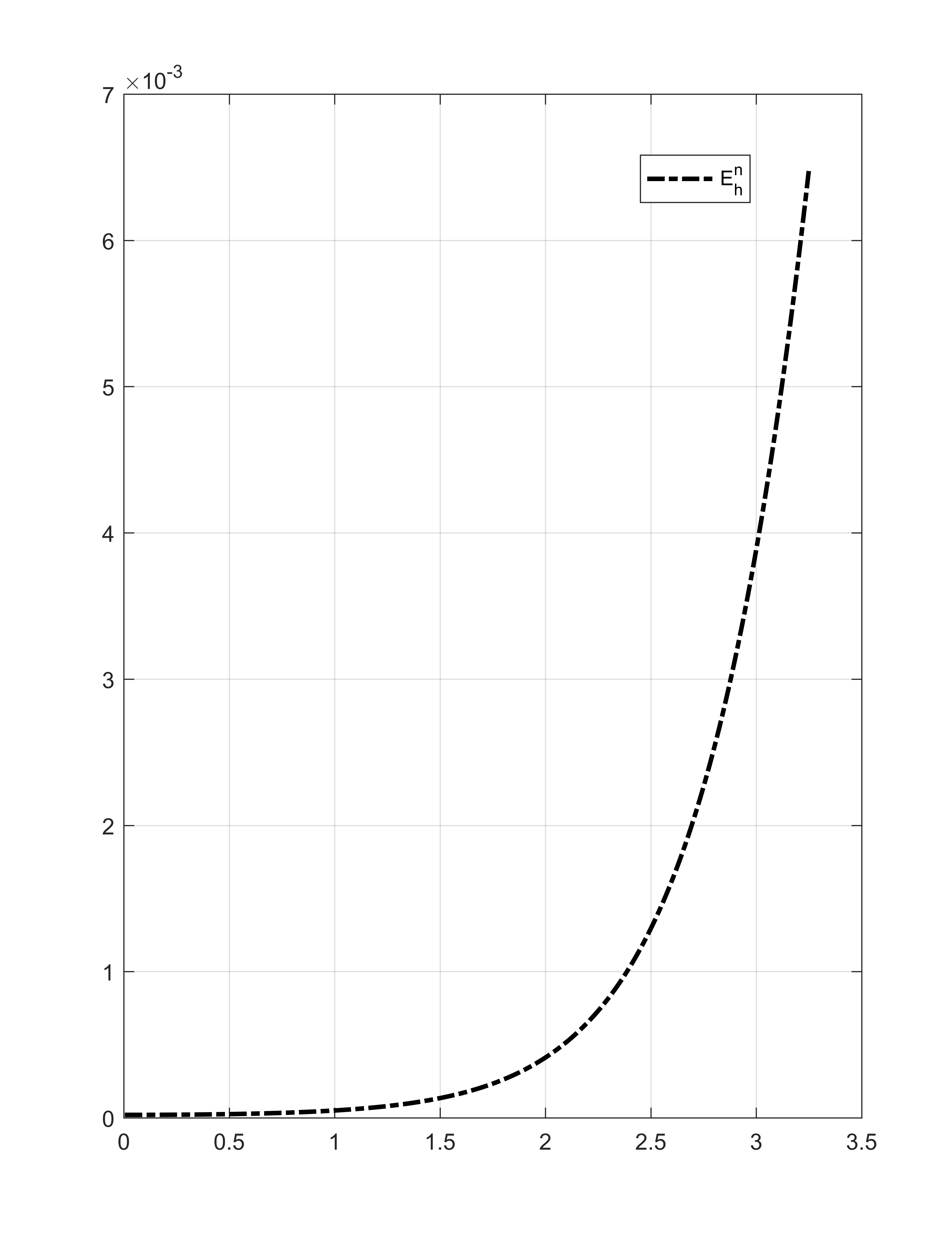}
	\includegraphics[width=6cm]{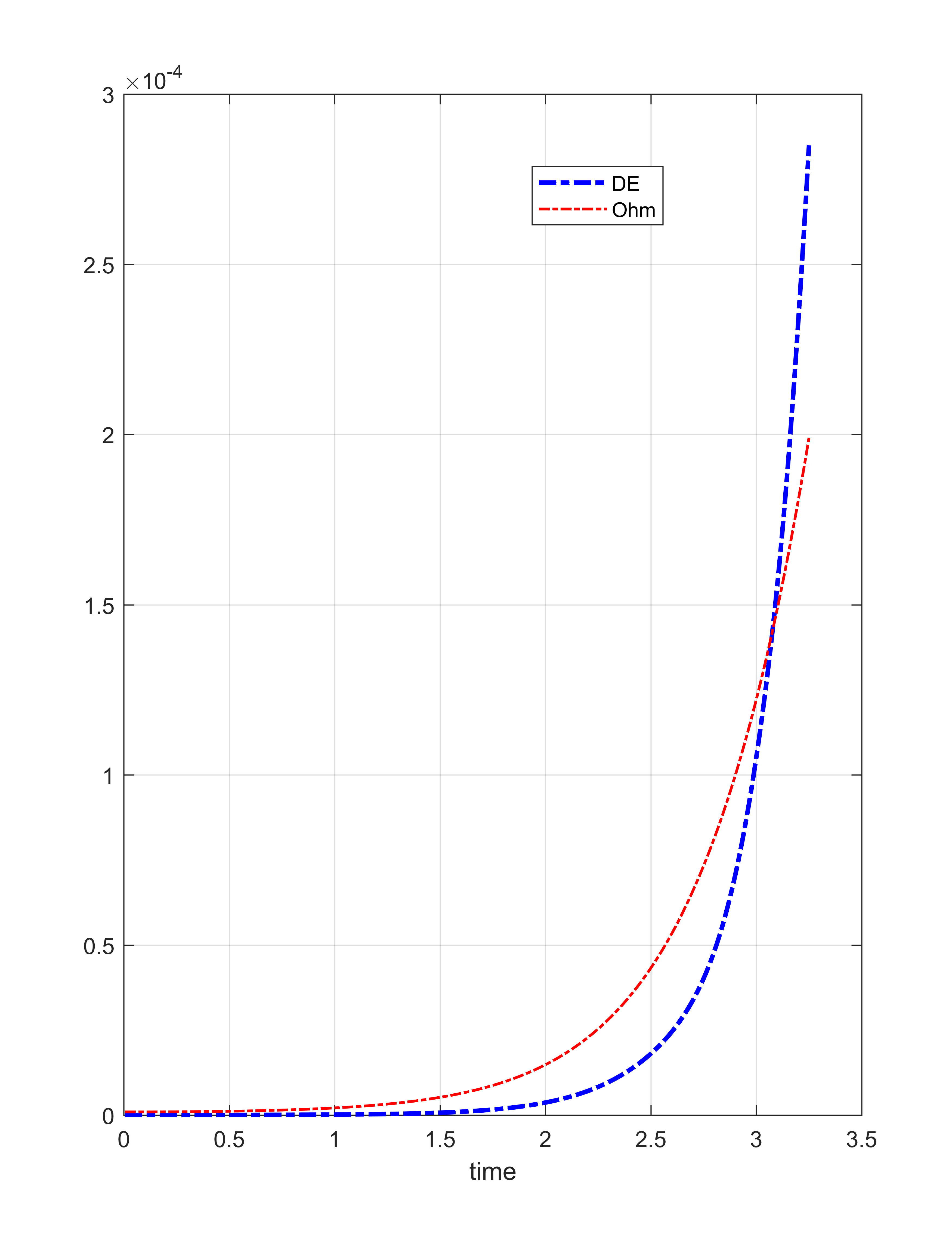}
	\caption{The evolution of $\Ce_h^n$ (left) and
    $\mathcal{D}_{E}^{n},\mathrm{Ohm}^{n}$ (right) (Example~\ref{ex:application}).}
    \label{ex3:fig2}
\end{figure}

\begin{figure}[!htbp]
	\centering
	\includegraphics[width=6cm]{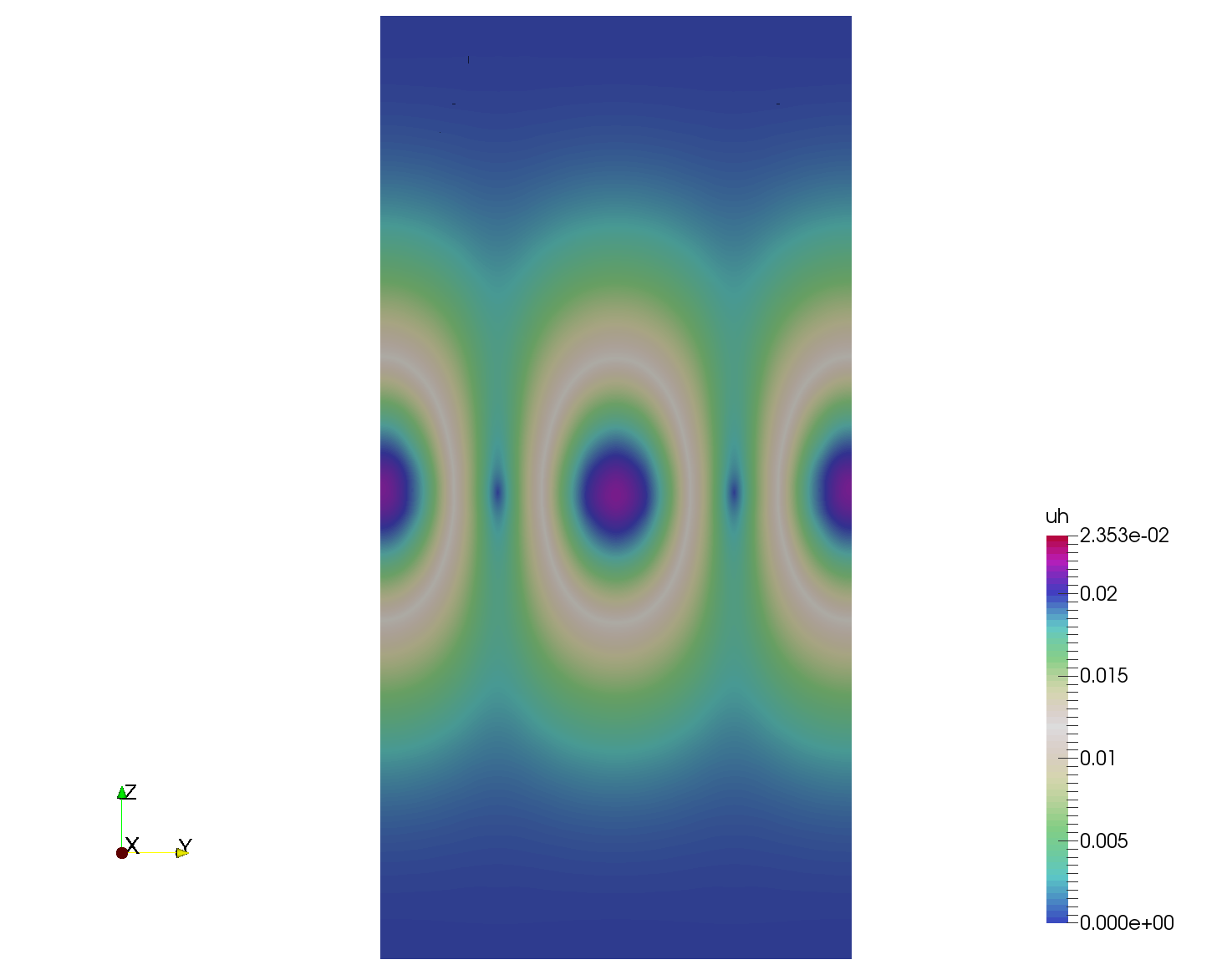}
	\includegraphics[width=6cm]{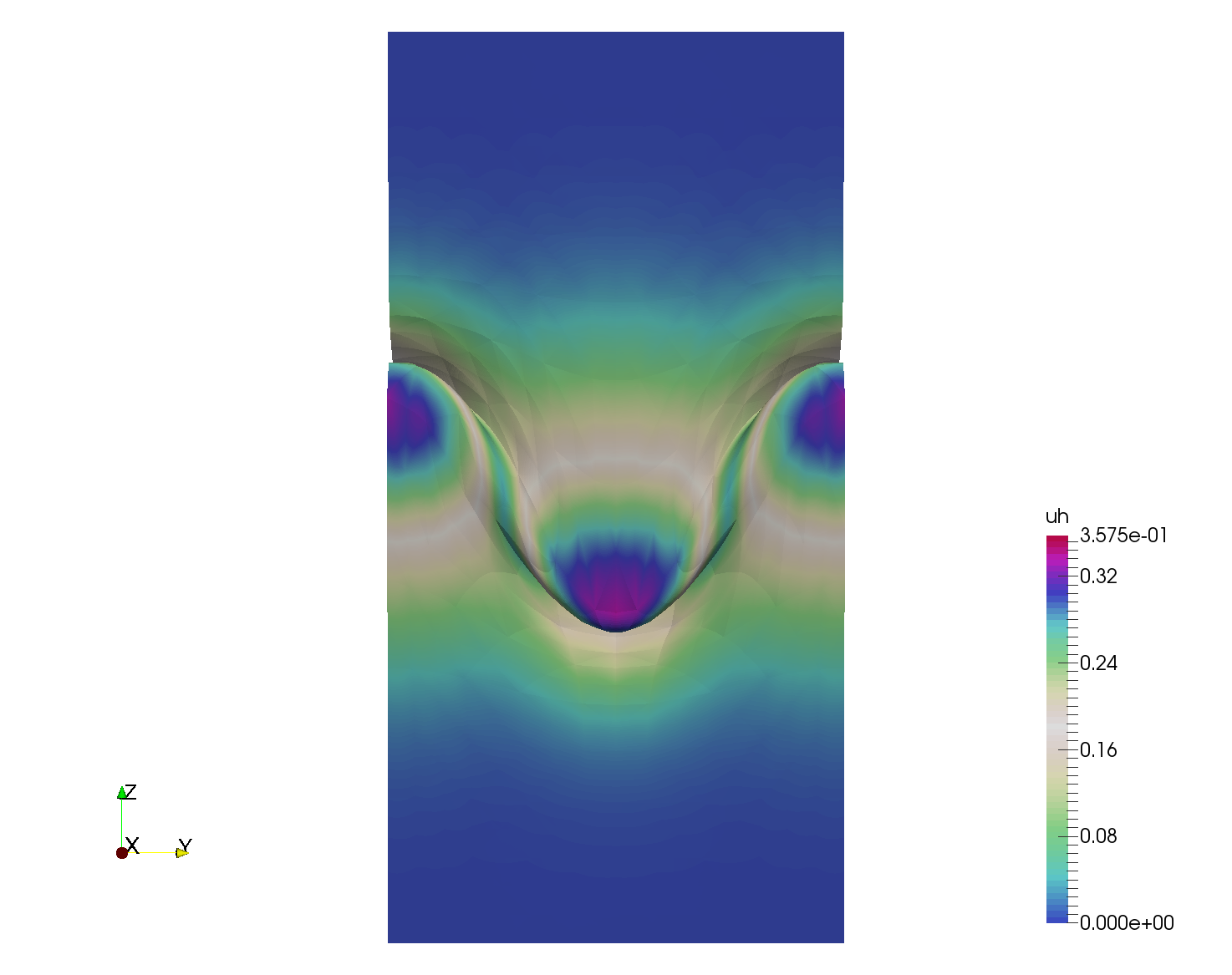}
	\caption{Distribution of the magnitude of $\Bu_h$ in the vicinity of $x=0.5$ cross section.
    The left panel is at $t=0.45$ and the right one is at $t=3.25$ (Example~\ref{ex:application}).}
    \label{ex3:fig4}
\end{figure}

\begin{figure}[!htbp]
	\centering
	\includegraphics[width=6cm]{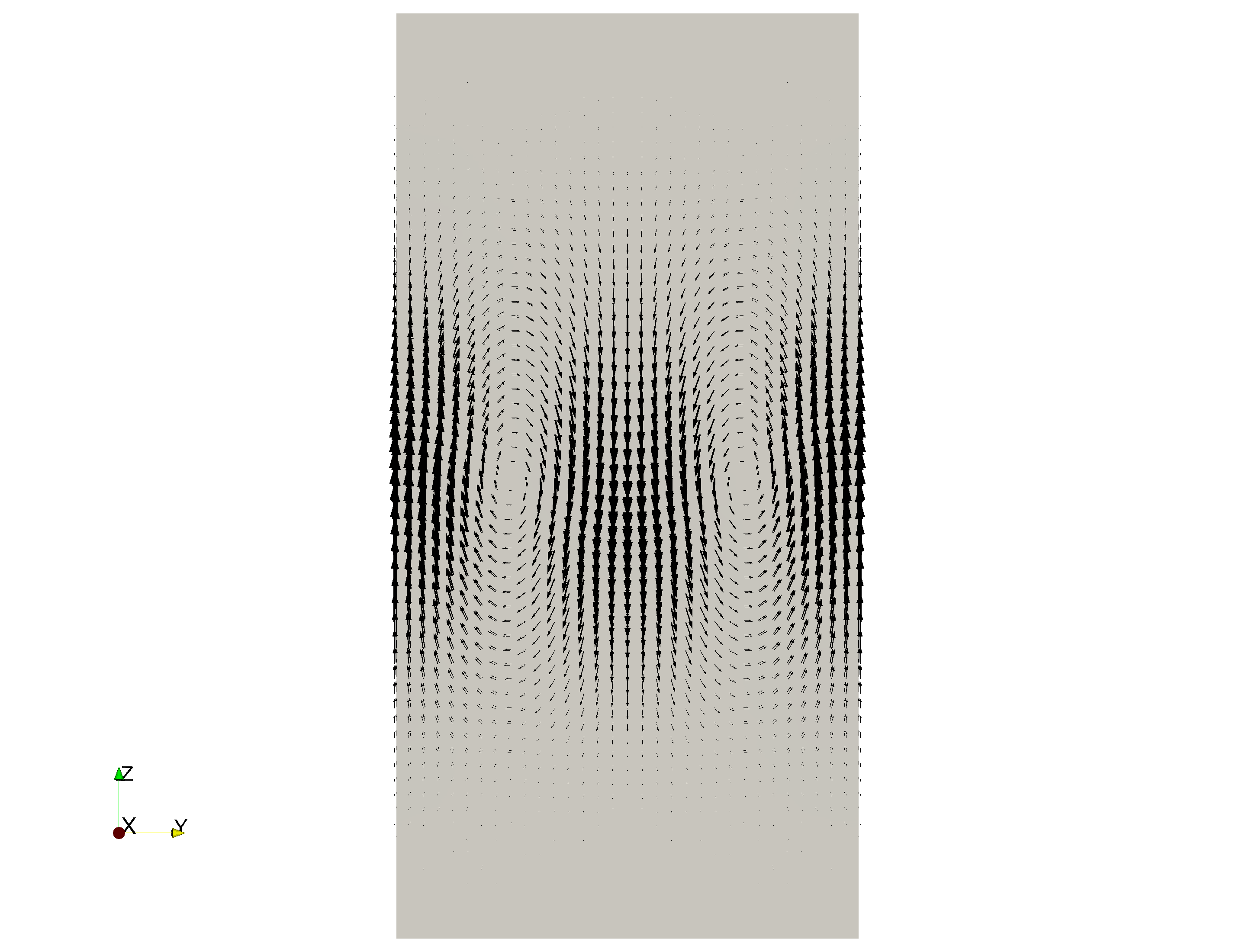}
	\includegraphics[width=6cm]{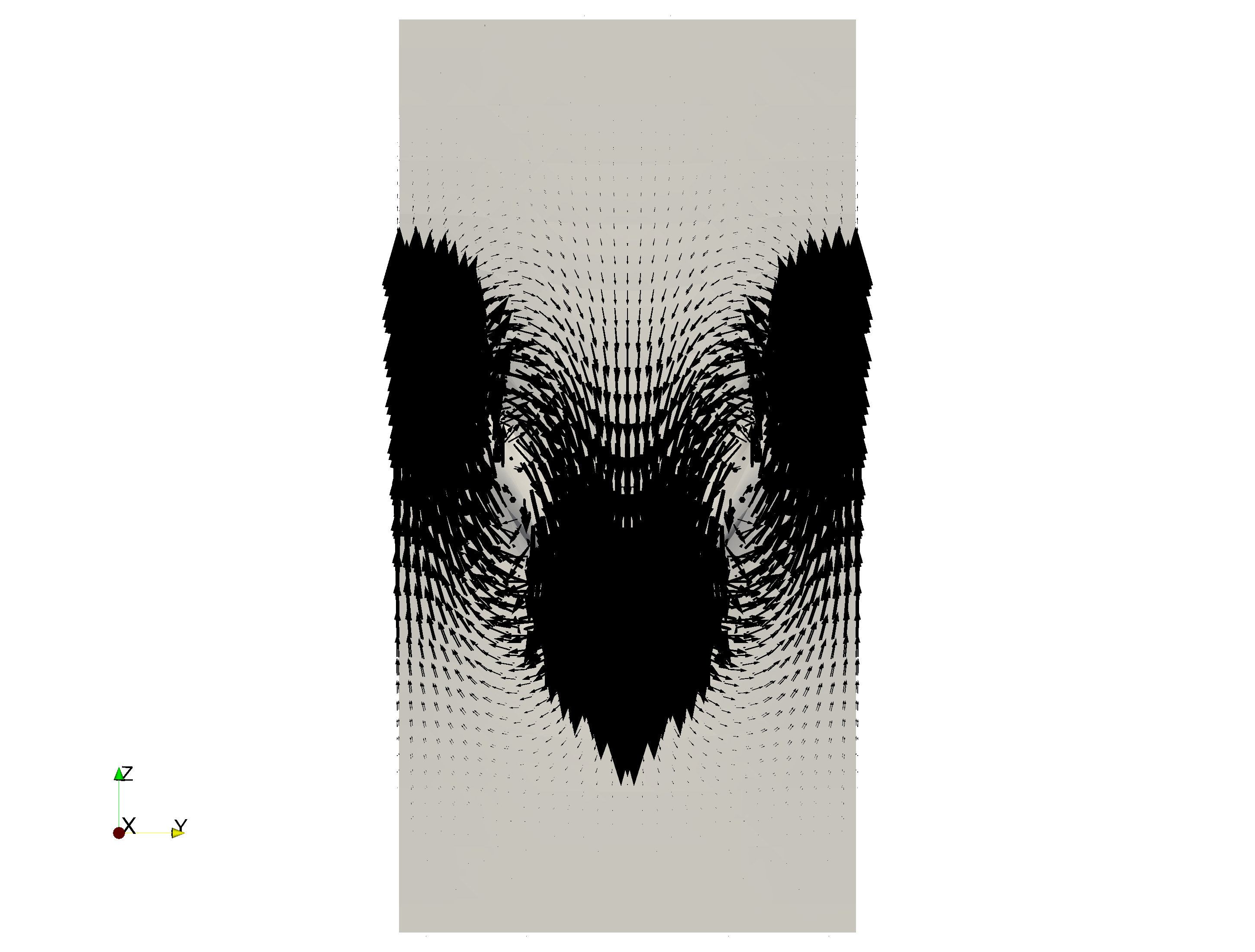}
	\caption{Distribution of velocity vector in the vicinity of $x=0.5$ cross section.
    The left panel is at $t=0.45$ (scaled by a factor of 5)
    and the right one is at $t=3.25$ (Example~\ref{ex:application}).}
    \label{ex3:fig5}
\end{figure}

\begin{figure}[!htbp]
	\centering
	\includegraphics[width=6cm]{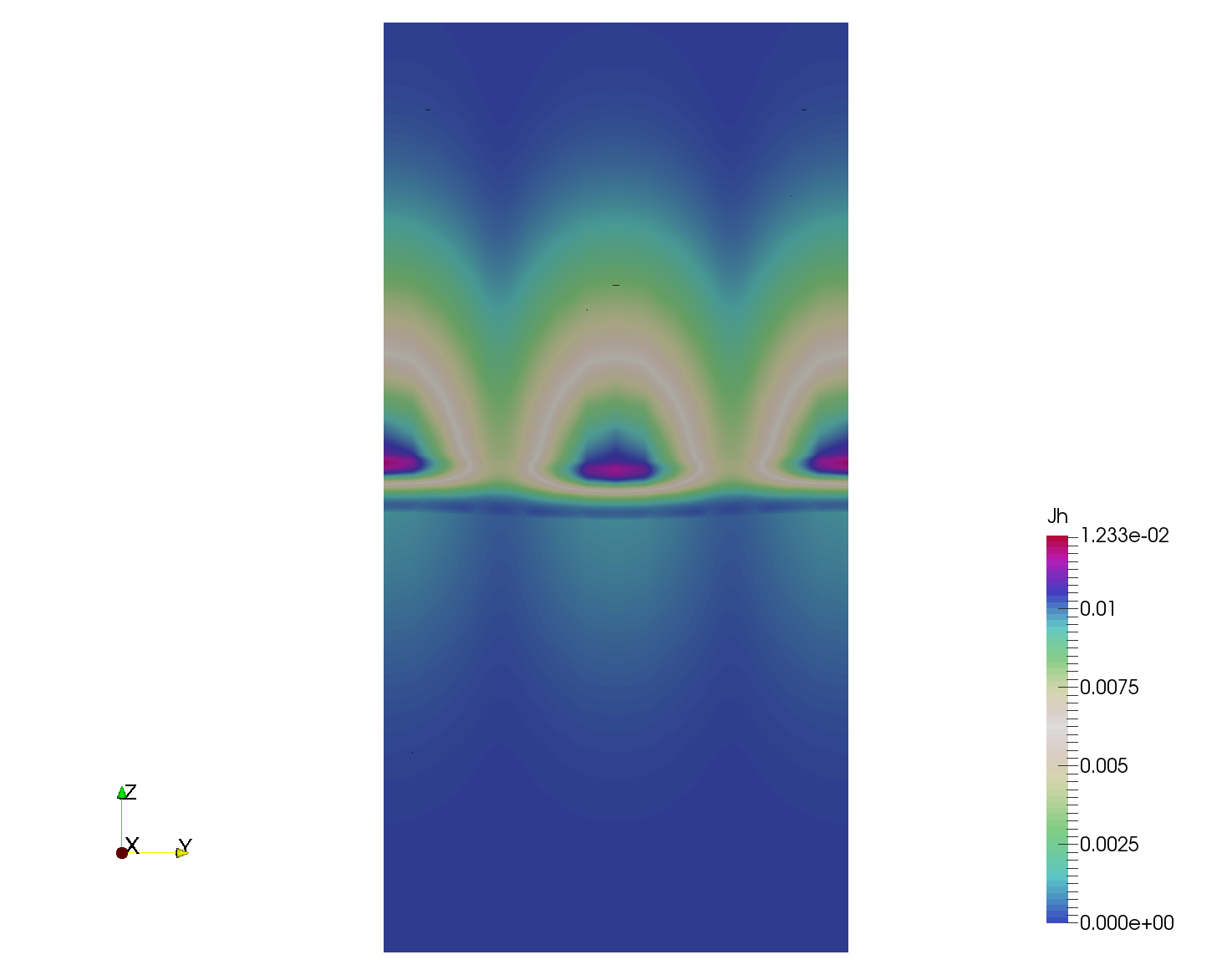}
	\includegraphics[width=6cm]{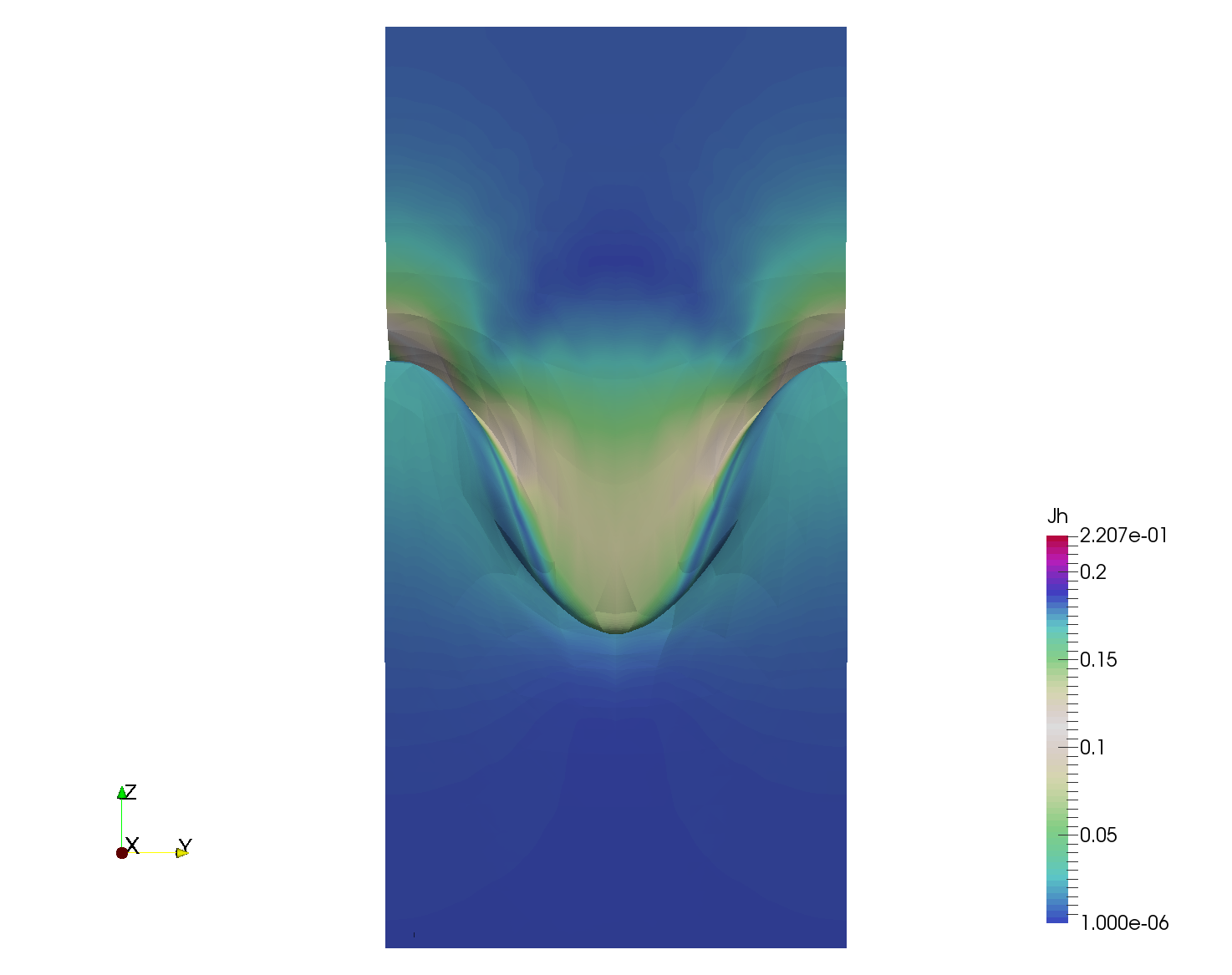}
	\caption{Distribution of the magnitude of $\BJ_h$ in the vicinity of $x=0.5$ cross section.
    The left panel is at $t=0.45$ and the right one is at $t=3.25$ (Example~\ref{ex:application}).}
    \label{ex3:fig6}
\end{figure}

\begin{figure}[!htbp]
	\centering
	\includegraphics[width=7cm]{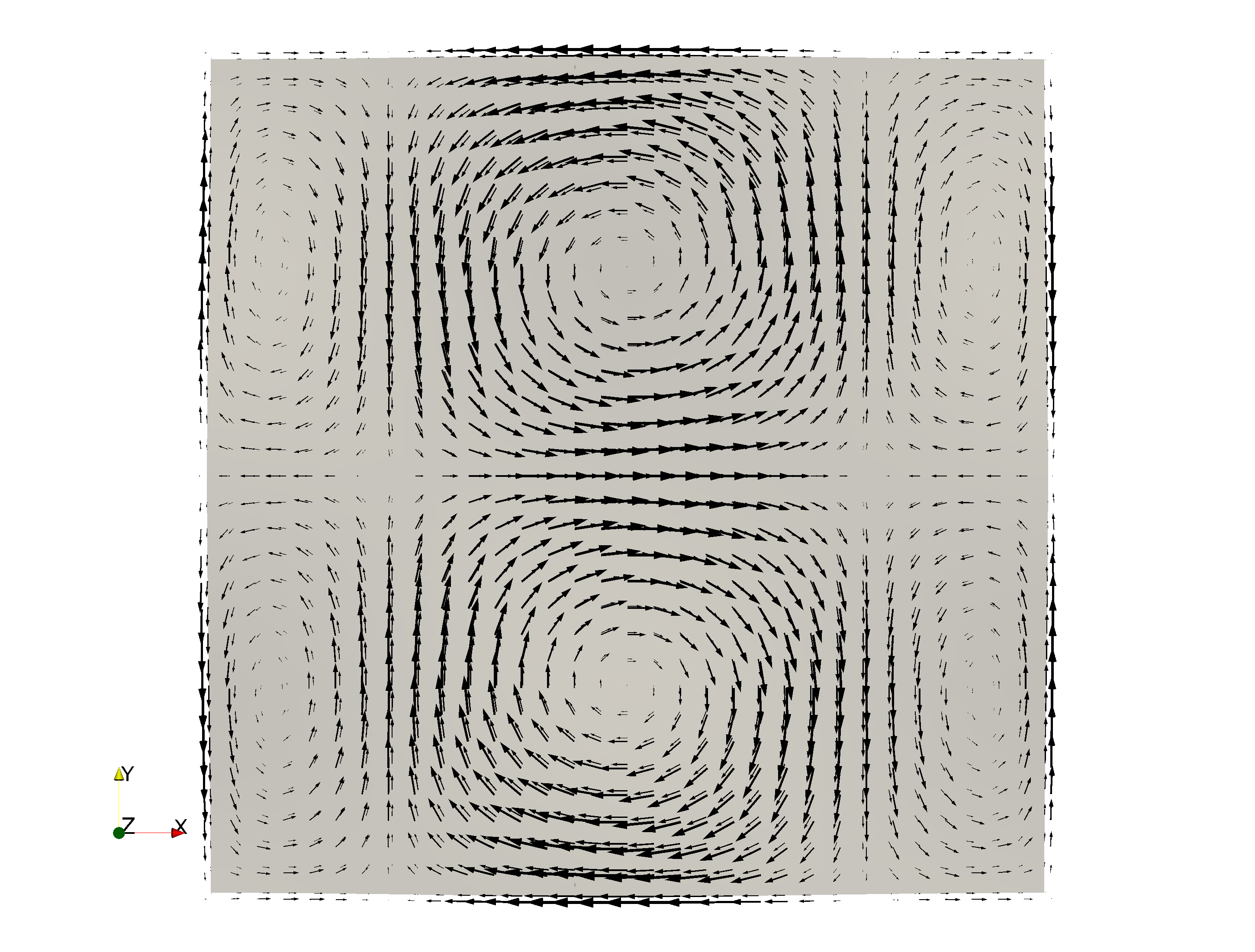}
	\includegraphics[width=7cm]{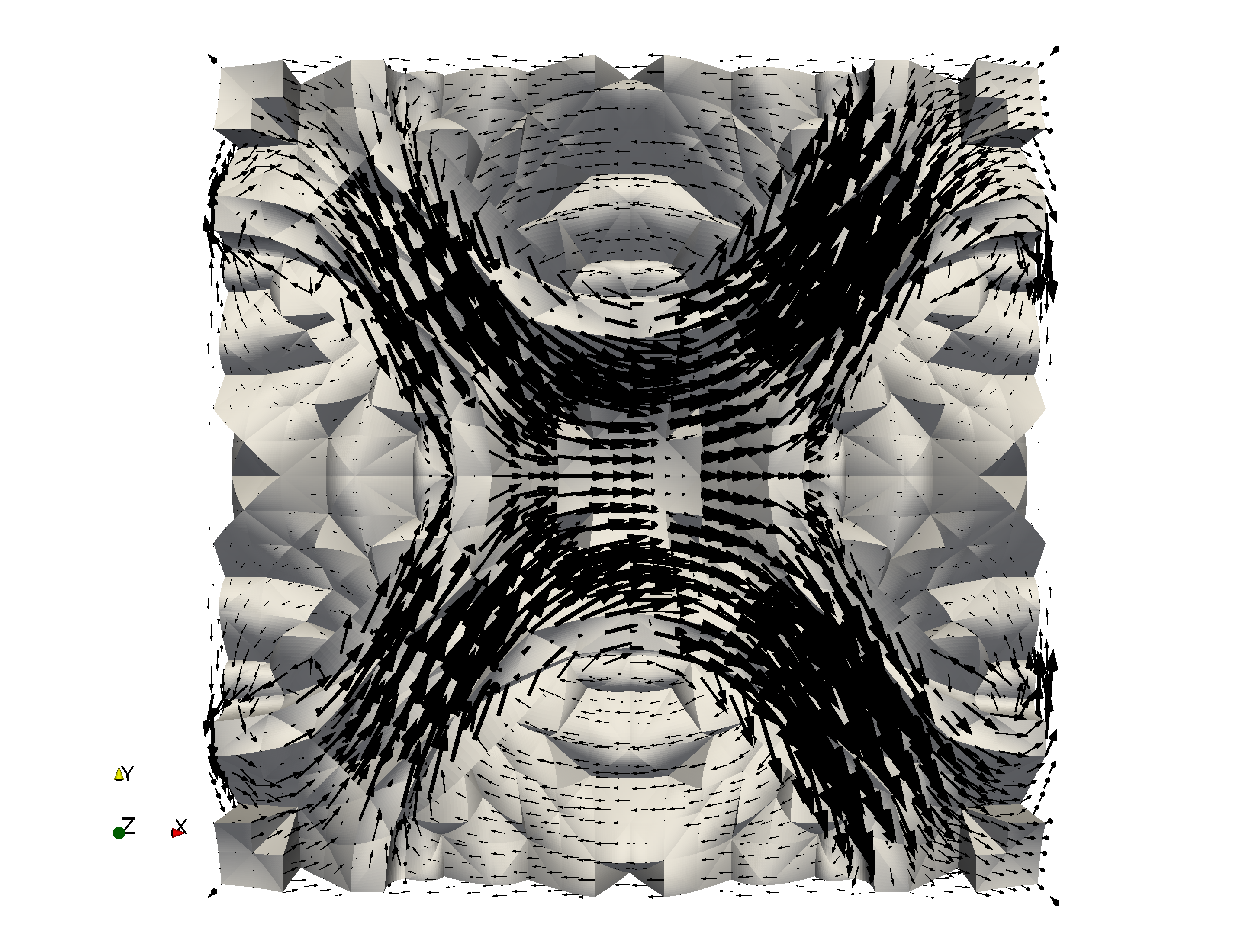}
	\caption{Distribution of current density vector in the vicinity of $z=0.05$ cross section.
    The left panel is at $t=0.45$ (scaled by a factor of 5)
    and the right one is at $t=3.25$ (Example~\ref{ex:application}).}
    \label{ex3:fig7}
\end{figure}

\section{Conclusion}\label{sec:conclusion}
In this work, we developed a high-order Lagrangian mixed finite element
method for three-dimensional variable-density incompressible inductionless
MHD with possible moving interface.
The resulting scheme preserves the charge-conservation constraint exactly
on evolving curvilinear meshes and satisfies a discrete energy identity; in the absence of external force, it is
unconditionally energy stable.
These properties are achieved through a strong mass conservation formulation for the density field together with
high-order parametric $\BH(\Div)$-conforming discretization of current density.
A series of numerical experiments demonstrate the expected convergence rates for smooth solutions,
the discrete energy-dissipation property and applicability of the method when moving interface is present.

\input{refs}

\end{document}

%% file: macros.tex
\providecommand{\Div}{\operatorname{div}}          % Divergence
\providecommand*{\Dist}[2]{\operatorname{dist}({#1};{#2})}   % distance
\providecommand*{\Dist}[2]{\Dist{#1}{#2}}
\newcommand{\Bf}{{\boldsymbol{f}}}

\newcommand{\Bn}{{\boldsymbol{n}}}

\newcommand{\Bu}{{\boldsymbol{u}}}
\newcommand{\Bv}{{\boldsymbol{v}}}

\newcommand{\Bx}{{\boldsymbol{x}}}

\newcommand{\Bz}{{\boldsymbol{z}}}

\newcommand{\BB}{{\boldsymbol{B}}}

\newcommand{\BD}{{\boldsymbol{D}}}

\newcommand{\BF}{{\boldsymbol{F}}}

\newcommand{\BH}{{\boldsymbol{H}}}

\newcommand{\BJ}{{\boldsymbol{J}}}

\newcommand{\BM}{{\boldsymbol{M}}}

\newcommand{\BP}{{\boldsymbol{P}}}

\newcommand{\BV}{{\boldsymbol{V}}}
\newcommand{\BW}{{\boldsymbol{W}}}

\newcommand{\Ce}{\mathcal{E}}

\newcommand{\Cj}{\mathcal{J}}

\newcommand{\Ct}{\mathcal{T}}

\newcommand{\bbI}{\mathbb{I}}

\newcommand{\bbR}{\mathbb{R}}

\newcommand{\be}{\begin{eqnarray}}
\newcommand{\ee}{\end{eqnarray}}

\newcommand{\ben}{\begin{eqnarray*}}
\newcommand{\een}{\end{eqnarray*}}

%% file: refs.tex
\bibliographystyle{amsplain}